\newcommand{\D }{\Delta }
\newcommand{\cqfd}{$\hfill{\Box}$}
\renewcommand{\O }{\Omega }
\newcommand{\ba}{\begin{align*}}
\newcommand{\ea}{\end{align*}}
\numberwithin{equation}{section}
 \newtheorem{theorem}{Theorem}[section]
\newtheorem{proposition}[theorem]{Proposition}
\newtheorem{lemma}[theorem]{Lemma}
\newtheorem{remark}[theorem]{Remark}
\newcommand{\ble}{\begin{lemma}}
\newcommand{\ele}{\end{lemma}}
\newcommand{\be}{\begin{equation*}}
\newcommand{\ee}{\end{equation*}}
\newcommand{\bel}{\begin{equation}}
\newcommand{\eel}{\end{equation}}
\newcommand{\ep}{\varepsilon}
\newcommand{\fr}{\frac }
\newcommand{\R}{\mathbb{R}}
\renewcommand{\to}{\rightarrow}
\newcommand{\To}{\longrightarrow}
\newcommand{\upp}{u_p}
\def\sideremark#1{\ifvmode\leavevmode\fi\vadjust{\vbox to0pt{\vss
 \hbox to 0pt{\hskip\hsize\hskip1em
 \vbox{\hsize2.1cm\tiny\raggedright\pretolerance10000
  \noindent #1\hfill}\hss}\vbox to15pt{\vfil}\vss}}}%
\numberwithin{equation}{section}
\begin{document}
\title[]{Convergence for a planar elliptic problem \\ with large exponent Neumann data}

\author[]{ Habib Fourti}
\address{Address: University of Monastir, Faculty of Sciences, 5019 Monastir, Tunisia.}
\address{ $\quad \quad \quad \quad \ $Laboratory LR 13 ES 21, University of Sfax, Faculty of Sciences,
3000 Sfax, Tunisia.}
\address{e-mail: habib.fourti@fsm.rnu.tn - habib40@hotmail.fr}

\thanks{2010 \textit{Mathematics Subject classification:} 35B05, 35B06, 35J91. }

\thanks{ \textit{Keywords}: Nonlinear boundary value problem, large exponent,
asymptotic analysis, concentration of solutions.}

\maketitle
\begin{abstract}
We study positive solutions $u_p$ of the nonlinear Neumann elliptic
problem $\D u =u$ in $\Omega $, $\partial u/\partial\nu =
|u|^{p-1}u$ on $\partial\O$, where $\Omega $ is a bounded open
smooth domain in $\R^2$. We investigate the asymptotic behavior of
families of solutions $u_p$ satisfying an energy bound condition
when the exponent $p$ is getting large. Inspired by the work of
Davila-del Pino-Musso \cite{DavilaDM}, we prove that $u_p$ is
developing $m$ peaks $x_i\in\partial \Omega$, in the sense
$u_p^p/\int_{\partial \Omega}u_p^p$ approaches the sum of $m$ Dirac
masses at the boundary and we determine the localization of these
concentration points.
\end{abstract}

\section{Introduction} \label{SectionIntroduction}
The purpose of this paper is to study the following problem
\begin{equation}\label{problem1}
\left\{\begin{array}{lr}\Delta u= u\qquad  \mbox{ in }\Omega\\
\displaystyle\frac{\partial u}{\partial \nu}=|u|^{p-1}u\ \mbox{ on
}\partial \Omega
\end{array}\right.
\end{equation}
where $p>1$, $\Omega\subset\R^2$ is a smooth bounded domain and
$\partial/\partial \nu$ denotes the derivative with respect to the
outward normal to $\partial \Omega$. Elliptic problem with nonlinear
boundary condition has been widely studied in the past by many
authors and it is still an area of intensive research, see for
instance \cite{BFS, Castro2, Castro1, Cherrier, DavilaDM, GL, LWZ,
Takahashi}.\\
Problem \eqref{problem1} has a variational structure. Indeed, its
solutions are in a one-to-one correspondence with the critical
points of the functional:
$$E_p(u)=\frac{1}{2}\int_{\Omega}|\nabla u|^2+ u^2 \
 dx- \frac{1}{p+1}\int_{\partial \Omega}|u|^{p+1} \ d\sigma(x)$$
defined on the sobolev space $H^1(\Omega)$. Trace and Sobolev
embeddings tell us that we have
$$H^1(\Omega)\hookrightarrow
H^\frac{1}{2}(\partial\Omega)\hookrightarrow L^p(\partial\Omega)$$
and that the embeddings are compact for every $ p >1$. Since in
dimension $2$, any exponent $p>1$ is subcritical (with respect to
the Sobolev embedding) it is well known, by standard variational
methods, that \eqref{problem1} has at least one positive solution.\\
Our main result provides a description of the asymptotic behavior,
as $p\rightarrow +\infty$, of positive solutions of \eqref{problem1}
under a uniform bound of their energy, namely we consider any family
$(u_p)$ of positive solutions to \eqref{problem1} satisfying the
condition
\begin{equation}\label{energybound}
p\displaystyle\int_{\Omega}|\nabla u_p|^2+u_p^2 \ dx \rightarrow
\beta \in \mathbb{R}, \quad \hbox{as }p\rightarrow +\infty.
\end{equation}
Our strategy goes along the method developed by Davila, del Pino and
Musso in \cite{DavilaDM} when they analyzed a nonlinear exponential
Neumann boundary condition. Indeed, Davila et al. were interested to
the following problem
\begin{equation}\label{problem11}
\left\{\begin{array}{lr}\Delta u= u\qquad  \mbox{ in }\Omega\\
\frac{\partial u}{\partial \nu}=\varepsilon e^u\ \mbox{ on }\partial
\Omega
\end{array}\right.
\end{equation}
where $\varepsilon$ is a small parameter. They proved that any
family of solutions $u_\varepsilon$ for which
$\varepsilon\int_{\partial \Omega}e^{u_\varepsilon}$ is bounded
develops, up to subsequences, a finite number $m$ of peaks $\xi_i\in
\partial\Omega$, $\varepsilon\int_{\partial
\Omega}e^{u_\varepsilon}\rightarrow 2m\pi$, and reciprocally, they
established that at least two such families exist for any given
$m\geq 1$.

 $\quad$ There is another source of
motivation for problem we are considering here. Its analogous usual
elliptic equation is
\begin{equation}\label{problem111}
\left\{\begin{array}{lr}\Delta u= |u|^{p-1}u\qquad  \mbox{ in }\Omega\\
 u=0\ \mbox{ on }\partial \Omega
\end{array}\right.
\end{equation}
known as Lane-Emden equation. Such equation has been investigated
widely in the last decades, see for example
\cite{AdiGrossi,DGIP1,DeMarchisIanniPacellaJEMS,
DeMarchisIanniPacellaPositivesolutions,DeMarchisIanniPacellaLondon,GrossiGrumiauPacella1}.
Concerning general positive solutions (i.e. not necessarily with
least energy) of the Lane-Emden Dirichlet problem, a first
asymptotic analysis was carried out in
\cite{DeMarchisIanniPacellaJEMS} showing that, under the
corresponding energy bound condition, all solutions $(u_p)$
concentrate at a finite number of points in $\overline{\Omega}$.
Later the same authors gave in
\cite{DeMarchisIanniPacellaPositivesolutions} a description of the
asymptotic behavior of $u_p$ as $p\rightarrow \infty$. They
completed this study in a recent work with Grossi \cite{DGIP1}. More
precisely, they showed quantization of the energy to multiples of
$8\pi e$ and proved convergence to $\sqrt{e}$ of the $L^\infty$-
norm, thus confirming the conjecture made in
\cite{DeMarchisIanniPacellaPositivesolutions}. A proof of this
quantization conjecture was also independently done by Thizy
\cite{Thizy}.

$\quad$ Going back to \eqref{problem1}, the asymptotic behavior of
general positive solutions has not been studied yet. Before stating
our theorem let us review some known facts. In \cite{Takahashi},
Takahashi studied \eqref{problem1} by analyzing the asymptotic
behavior of least energy solutions (hence positive), as
$p\rightarrow \infty$. He proved that the least energy solutions
remain bounded uniformly with respect to $p$ and develop one peak on
the boundary. The location of this blow-up point is associated with
a critical point of the Robin function $H(x, x)$ on the boundary,
where $H$ is the regular part of the Green function of the
corresponding linear Neumann problem. More precisely, the Green
function $G(x, y)$ is the solution of the problem
\begin{equation}\label{Greenequation}
\left\{\begin{array}{lr}\Delta_x G(x,y)= G(x,y)\qquad  \mbox{ in }\Omega,\\
\displaystyle\frac{\partial G}{\partial \nu_x}(x,y)=\delta _y(x)
\mbox{ on }\partial \Omega,
\end{array}\right.
\end{equation}
for all $y\in \partial \Omega$ and its regular part
\begin{equation}\label{regularpart}
H(x, y) = G(x, y) - \frac{1}{\pi}\log \frac{1}{ |x - y|} .
\end{equation}
Note that least energy solutions $(u_p)$ of this 2-dimensional
semi-linear Neumann problem satisfy the condition
$$p\displaystyle\int_{\Omega}|\nabla u_p|^2+u_p^2\ dx \rightarrow
2\pi e, \quad \hbox{as }p\rightarrow +\infty,$$ which is a
particular case of \eqref{energybound}. Later, following a similar
argument firstly introduced in \cite{AdiGrossi}, Castro
\cite{Castro1} identified a limit problem by showing that suitable
scaling of the least energy solutions $(u_p)$ converges in $C^1_
{loc}(\overline{\mathbb{R}^2_+})$ to a regular solution $U$ of the
Liouville problem
\begin{equation}\label{Liouvilleproblem}
\left\{\begin{array}{lr}\Delta U= 0\qquad  \mbox{ in }\mathbb{R}^2_+\\
\displaystyle\frac{\partial U}{\partial \nu}=e^U\ \mbox{ on
}\partial
\mathbb{R}^2_+\\
\int_{\partial\mathbb{R}^2_+} e^{U}=2\pi  \hbox{ and }
\sup_{\overline{\mathbb{R}^2_+}} U<\infty.
\end{array}\right.
\end{equation}
He also proved that $\|u_p\|_{\infty}$ converges to $\sqrt{e}$ as
$p\rightarrow +\infty$, as it had been previously conjectured in
\cite{Takahashi}. All these results are respectively similar to
those contained in \cite{RenWeiTAMS1994,RenWeiPAMS1996} and
\cite{AdiGrossi} which focus on the least energy solution of the
Lane-Emden problem in the plane.\\
However, problem \eqref{problem1} may have positive solutions with
an arbitrarily large number of boundary peaks, as shown by Castro in
\cite{Castro1}. Indeed, he proved that given any integer $m\geq 1$,
problem \eqref{problem1} has at least two families of positive
solutions $u_p $, each of them satisfying
$$pu_p(x)^{p+1}\rightharpoonup   \displaystyle 2\pi e\sum^m_{i=1}\delta_{\xi_i}
\quad \hbox{weakly in the sense of measure in }
\partial \Omega,
$$
as $p\rightarrow +\infty$, and the peaks of these two solutions are
located near points $\xi=(\xi_i, \ldots, \xi_m)\in (\partial
\Omega)^m$ corresponding to two distinct critical points of the
following functional defined on $(\partial \Omega)^m$
$$\varphi_m(x_1, \ldots, x_m) :=-\big[ \displaystyle\sum_{i=1}^m
 H(x_i , x_i ) + \sum_{j\neq i} G(x_i , x_j ) \big].$$
It is natural to ask whether these properties hold for all families
of positive solutions $(u_p)$ satisfying \eqref{energybound}, as
$p\rightarrow \infty$. To our knowledge, a complete answer to this
conjecture has not been given so far, while partial results are
available as we describe below. In fact we extend the concentration
result in \cite{DavilaDM}, concerning elliptic problem with
exponential Neumann data, to a large exponent one. More precisely,
we prove that $u_p^p/\int_{\partial \Omega}u_p^p$ approaches the sum
of $m$ Dirac masses at the boundary. The location of these possible
points of concentration may be further characterized as solutions of
a system of equations defined explicitly in terms of the gradients
of the
above Green function and its regular part.\\
In order to state our main result we introduce some notations. Let
$$v_p=\displaystyle\frac{u_p}  {\int_{\partial \Omega}u_p^p \
d\sigma(x)},$$ where $u_p$ is a positive solution of
\eqref{problem1} satisfying \eqref{energybound}. We define the
blow-up set $S$ of $v_{p_n}$ to be the subset of $\partial\Omega$
such that $x \in S$ if there exist a subsequence, still denoted by
$v_{p_n}$, and a sequence $x_n$ in $\overline{\Omega}$ with
$$v_{p_n}(x_n) \rightarrow +\infty \quad \hbox{and}\quad
x_n\rightarrow x .$$
 Now, we are able to state the following result:
\begin{theorem}\label{theorem}
Let $\Omega\subset\mathbb{R}^2$ be a smooth bounded domain. Then for
any sequence $v_{p_n}$ of $v_p$ with $p_n\rightarrow \infty$, there
exists a subsequence (still denoted by $v_{p_n}$) such that the
following statements hold true.
\begin{enumerate}
\item There exists a finite collection of distinct points $x_i \in
\partial \Omega,\quad i = 1,\ldots,m$ such that $S=\{x_i, \ 1\leq i\leq
m\}$.
\item $$ f_n:=\frac{u_{p_n}^{p_n}}{\int_{\partial \Omega}u_{p_n}^{p_n} \
d\sigma(x)}\displaystyle\rightharpoonup^*
\sum^m_{i=1}a_i\delta_{x_i}$$in the sense of Radon measures on
$\partial \Omega$ where
\begin{equation}\label{weight}
a_i=\displaystyle\lim_{r\rightarrow 0}\lim_{n\rightarrow
+\infty}\left(\frac{2\pi}{(p_n+1)(\int_{\partial
\Omega}u_{p_n}^{p_n}d\sigma(x))^2}\displaystyle\int_{\partial
\Omega\cap B_r(x_i)}u_{p_n}^{p_n+1} d\sigma(x) \right)^\frac{1}{2},
\quad \forall 1\leq i \leq m.\end{equation}
\item $\displaystyle v_{p_n}\rightarrow \sum^m_{i=1}a_iG(. , x_i)$ in $C^1_{loc}(\overline{\Omega}\setminus S)$, $L^t(\Omega)$ and
$L^t(\partial\Omega)$ respectively for any $1\leq t < \infty$, where
$G$ is the Green's function for Neumann problem
\eqref{Greenequation}.
\item The concentration points $x_i, \
i=1,\ldots, m$ satisfy
\begin{equation}\label{x_j relazione}
a_i \nabla_{\tau(x_i)} H(x_i,x_i)+\sum_{i\neq
\ell}a_\ell\nabla_{\tau(x_i)} G(x_i,x_\ell)=0,
\end{equation}
where $\tau(x_i)$ is a tangent vector to $\partial\Omega$ at $x_i$.
\end{enumerate}
\end{theorem}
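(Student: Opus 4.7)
The plan is to follow the scheme of Davila--del Pino--Musso \cite{DavilaDM}, adapted to the polynomial Neumann nonlinearity $|u|^{p-1}u$. Set $f_p := u_p^p/\int_{\partial\Omega}u_p^p\,d\sigma$, so that $v_p$ solves $\Delta v_p = v_p$ in $\Omega$ with $\partial v_p/\partial\nu = f_p$ on $\partial\Omega$ and $\|f_p\|_{L^1(\partial\Omega)}=1$. By Banach--Alaoglu, up to a subsequence $f_{p_n} \rightharpoonup^* \mu$ as Radon measures on $\partial\Omega$ for some nonnegative $\mu$ of mass at most $1$. The blow-up set $S$ will turn out to coincide with the set of atoms of $\mu$, which we label $\{x_1,\ldots,x_m\}$.

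First I would prove a boundary Brezis--Merle alternative: there exists a threshold $\alpha_0>0$ such that whenever $\mu(\{x_0\})<\alpha_0$ for some $x_0\in\partial\Omega$, the sequence $v_{p_n}$ is uniformly bounded in $L^\infty$ in a half-neighbourhood of $x_0$ and hence $x_0\notin S$. This relies on the Green representation $v_p(x)=\int_{\partial\Omega}G(x,y)f_p(y)\,d\sigma(y)$, the logarithmic behaviour of $G$ encoded in \eqref{regularpart}, and a Trudinger--Moser type estimate on the half-plane applied to the singular integral. Interior blow-up is ruled out by standard interior elliptic estimates together with the energy bound \eqref{energybound}, so $S\subset\partial\Omega$; since $\mu$ has finite mass, $S=\{x_1,\ldots,x_m\}$ is finite, giving item (1).

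To identify the weights in \eqref{weight} and obtain item (2), I would pick $y_{i,n}\in\overline{\Omega}\cap B_r(x_i)$ where $u_{p_n}$ attains its local maximum near $x_i$, flatten the boundary, and rescale
$$U_n(y):=p_n\bigl(u_{p_n}(y_{i,n}+\mu_n y)-u_{p_n}(y_{i,n})\bigr)/u_{p_n}(y_{i,n}),$$
with $\mu_n$ chosen so that $p_n\mu_n u_{p_n}(y_{i,n})^p=1$. Arguing as in Castro \cite{Castro1} and Adimurthi--Grossi \cite{AdiGrossi}, $U_n$ converges in $C^1_{\mathrm{loc}}(\overline{\mathbb{R}^2_+})$ to a solution $U$ of the Liouville problem \eqref{Liouvilleproblem}, and in particular $\int_{\partial\mathbb{R}^2_+}e^U = 2\pi$. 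Translating this boundary mass back to the original scale produces exactly formula \eqref{weight} for $a_i$ and shows $\mu=\sum_i a_i\delta_{x_i}$. Away from $S$, Green's formula, dominated convergence and the uniform bound of $v_p$ on compact subsets of $\overline\Omega\setminus S$ then give $v_{p_n}\to \sum_i a_i G(\cdot,x_i)$ in $C^1_{\mathrm{loc}}(\overline\Omega\setminus S)$ and in $L^t$ for every $t<\infty$, proving item (3).

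Finally, for the location condition \eqref{x_j relazione}, I would apply a local Pohozaev--type identity on $\Omega\cap B_r(x_i)$, multiplying $\Delta v_p = v_p$ by the tangential derivative $\nabla_{\tau(x_i)} v_p$ and integrating by parts. In the double limit $n\to\infty$ followed by $r\to 0$, the smooth part of $v_{p_n}$ converges to $a_i H(\cdot,x_i)+\sum_{\ell\neq i}a_\ell G(\cdot,x_\ell)$, whose tangential derivative at $x_i$ is precisely the content of \eqref{x_j relazione}, while the local singular contribution vanishes by the radial symmetry of the limit Liouville profile $U$ around the peak. The main obstacle is twofold: carrying out the Brezis--Merle analysis on the curved boundary $\partial\Omega$ (rather than on a half-plane) and controlling uniformly the Pohozaev remainders in the $r\to 0$ limit, because the regular part of the Green function is only logarithmically controlled near the boundary and the curvature of $\partial\Omega$ near $x_i$ generates additional error terms that must be bounded uniformly in $p_n$.
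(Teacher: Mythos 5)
Your outline for items (1) and (3) matches the paper's route: a Brezis--Merle alternative for the linear Neumann problem, the Green representation, finiteness of $S$, then Brezis--Strauss $W^{1,q}$ bounds and elliptic regularity away from $S$. Note, however, that the decisive step there is not a Trudinger--Moser estimate on the half-plane but the conversion of the polynomial boundary data $f_n=\gamma_n^{p_n-1}v_n^{p_n}$ into an exponential bound $f_n\le e^{t'v_n}$ (the Ren--Wei trick with the quantity $L_0$); this conversion requires the a priori estimates $\|u_p\|_{L^\infty(\overline\Omega)}\le C$ and $c\le p\int_{\partial\Omega}u_p^p\le C$ of Proposition \ref{teo:BoundEnergia}, which occupy all of Section 2 and cannot be skipped.

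The genuine gaps are in items (2) and (4). For the weights you propose a local blow-up at a maximum point near each $x_i$, convergence to the Liouville bubble, and then claim this ``produces exactly'' \eqref{weight}. This does not work as stated: the paper establishes bubble convergence only at the global maximum point (Proposition \ref{teo:BoundEnergia}(ii)); at an arbitrary concentration point there is no lower bound on the local sup of $u_{p_n}$, no exclusion of several bubbles or towers inside $B_r(x_i)$, and hence no way to identify the full local mass $\int_{\partial\Omega\cap B_r(x_i)}f_n\,d\sigma$ with the mass $2\pi$ of a single bubble. The paper explicitly remarks that this route does not determine the $a_i$'s; formula \eqref{weight} is not a computation of $a_i$ but the identity $a_i^2=2\pi C_1$, where $C_1$ is the limit of $(p_n+1)^{-1}\gamma_n^{-2}\int_{\partial\Omega\cap B_r(x_i)}u_n^{p_n+1}$, and it is extracted from a Pohozaev identity, not from the bubble. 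Relatedly, in item (4) your claim that the singular contribution to the Pohozaev identity vanishes by radial symmetry is false: the singular part $\widetilde s=\frac{a_1}{\pi}\log\frac{1}{|\cdot|}$ contributes nonvanishing quadratic terms proportional to $a_1^2$ (computed via \cite[Lemma 9.3]{DavilaDM}), so one obtains a single scalar identity mixing the three unknowns $C_1$, $a_1^2$ and the tangential derivative of the regular part. The device you are missing is the paper's tunable conformal flattening: the diffeomorphism $\Phi_c$ is chosen so that the parameter $\alpha=h_{y_1}(0)$ can be prescribed arbitrarily, and writing the limiting identity for all $\alpha$ decouples it into $\widetilde w_{y_1}(0)=0$ (which is \eqref{x_j relazione}) and $a_1^2=2\pi C_1$ (which is \eqref{weight}). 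Without this, neither conclusion follows from the Pohozaev identity alone.
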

As we mentioned before to prove this result we will proceed as in
\cite{DavilaDM}. But, to adopt their argument, a blow up technique
is needed to get some useful estimates. Indeed in both
\cite{Castro1} and \cite{Takahashi} the authors established the
facts that
\begin{equation}\label{estimates}
c_1\leq\|u_p\|_{L^{\infty}(\overline{\Omega})}\leq c_2 \quad
\hbox{and}\quad c_3\leq p\int_{\partial\Omega} |u_p|^p d\sigma(x)
\leq c_4
\end{equation}
for some positive constants $c_1, c_2, c_3 \hbox{ and }c_4$
independent of $p$ for the case of least energy solutions to pursue
the analysis. In our case we prove that these estimates hold true
for general solutions (not necessarily positive ) of
\eqref{problem1} satisfying the bound energy condition
\eqref{energybound}.
 We point out that this last condition is very crucial in our
 framework to analyze the asymptotic behavior of the families
 $(u_p)$. In fact, by using a suitable rescaling of the solution, we proved that the rescaled
 function about the maximum point of $|u_p|$, which is located on the boundary of $\Omega$, converges
 to the bubble not only for least energy solution (shown in \cite{Castro1}) but also
 for finite energy ones. This information allowed us to obtain \eqref{estimates}.
 This
will be the subject of Proposition \ref{teo:BoundEnergia} which is
the analogous result of \cite[Proposition
2.2]{DeMarchisIanniPacellaPositivesolutions} and \cite[Theorem 2.1]{
DeMarchisIanniPacellaLondon}
concerning Lane-Emden equation.\\
Let us point out that, as in \cite{DavilaDM} we have boundary
concentration phenomena due to the nonlinear condition. But the
exponent nonlinearity brings us some difficulties in our analysis.
\begin{remark}
In contrast with the exponential nonlinearity studied in
\cite{DavilaDM}, the argument of Davila et al does not give the
value of the coefficients or weights $a_i$'s nor the quantisation of
the energy result.
\end{remark}
To determine the data $a_i$'s we think that new ideas are needed.
May be a detailed local analysis is required to overcome this
difficulty. However arguing as in Brezis and Merle
\cite{Brezis-Merle}, we get $a_i\geq \pi/ L_0$ where $L_0$ is
defined in \eqref{L0}.\\
We conjecture that the $a_i$'s are equal and
more precisely we have
$$a_i= l^{-1}2\pi \sqrt{e}, \quad  \forall 1\leq i \leq m$$
where $l=\displaystyle\lim_{p\rightarrow +\infty} p\int_{\partial \Omega}u_p(x)^p$.\\
If we combine this conjecture with results of Theorem \ref{theorem}
we get, for any family of positive solutions $(u_p)$ of
\eqref{problem1} satisfying \eqref{energybound}, the following
results
\begin{itemize}
\item[$(i)$] up to subsequence
\[
pu_p(x)\to  2\pi\sqrt{e} \sum_{i=1}^m  G(x,x_i)\ \mbox{as} \
p\rightarrow +\infty,\  \mbox{in}\
C^1_{loc}(\bar\Omega\setminus\mathcal S),
\]
where $G$ is the Green's function for Neumann problem
\eqref{Greenequation};
\item[$(ii)$]
 $(x_1,\dots, x_m)$ is a critical point
of $\varphi_m$, that is the concentration points $x_i, \ i=1,\ldots,
m$ satisfy
\begin{equation}\label{x_j relazione}
 \nabla_{\tau(x_i)} H(x_i,x_i)+\sum_{\ell\neq
i}\nabla_{\tau(x_i)} G(x_i,x_\ell)=0.
\end{equation}
\end{itemize}
We also conjecture that $$\|u_p\|_{L^\infty(\overline{\Omega})}\to
\sqrt{e}\hbox{ and } p\int_\Omega |\nabla u_p(x)|^2+u_p^2(x)\,dx\to
m.2\pi e,\ \mbox{ as }p\to+\infty.$$
 This complete picture or behavior needs
more accurate analysis. Verification of these conjectures remains as
the future work \cite{DFIP}.\\


The remainder of this paper is organized as follows: Section 2 is
devoted to the asymptotic behavior of a general family $(u_p)$ of
nontrivial solutions of \eqref{problem1} satisfying
\eqref{energybound}. In Section 3 we give the proof of our theorem.
\section{General Asymptotic Analysis} \label{SectionGeneral Asymptotic Analysis}

\

It was first proved in \cite{LWZ} for more general nonlinearities,
that there exists at least one solution which changes sign. If the
nonlinearity is odd in $u$, as in our case, it is mentioned in
\cite{LWZ} that
 there exist infinitely many sign-changing solutions by a standard
argument (see the reference therein), so it makes sense to study the
properties of both positive and sign-changing solutions.
\\

This section is mostly devoted to the study of the  asymptotic
behavior of a general family $(u_p)_{p>1}$ of nontrivial solutions
of \eqref{problem1} satisfying the uniform upper bound
\begin{equation}
\label{energylimit} p\int_{\Omega}|\nabla u_p|^2 + u_p^2 \,dx\leq
C,\ \mbox{ for some  $C>0$ independent of $p$.}
\end{equation}

Recall that in \cite{Takahashi}  it has been proved that for any
family $(u_p)_{p>1}$ of nontrivial solutions of \eqref{problem1} the
following lower bound holds
\begin{equation}
\label{energylimitLower}
\liminf_{p\rightarrow+\infty}p\int_{\Omega}|\nabla u_p|^2
+u_p^2\,dx\geq 2\pi e,
\end{equation}
so the constant $C$ in \eqref{energylimit} is intended to satisfy
$C\geq 2\pi e$. Moreover if $u_p$ is sign-changing then we also know
that (see again \cite{Takahashi})
\begin{equation}
\label{energylimitLowerPosNeg}
\liminf_{p\rightarrow+\infty}p\int_{\Omega}|\nabla
u_p^{\pm}|^2+(u^{\pm}_p)^2 dx\geq 2\pi e.
\end{equation}

\

We recall that the energy functional associated to \eqref{problem1}
satisfies
\[
E_p(u)=\frac{1}{2}\|
u\|^2_{H^1(\Omega)}-\frac{1}{p+1}\|u\|_{L^{p+1}(\partial
\Omega)}^{p+1},\ \ u\in H^1(\Omega).
\]
Since for a solution $u$  of \eqref{problem1}
\begin{equation}
\label{energiaSuSoluzioni} E_p(u)=(\frac12-\frac1{p+1})\|
u\|^2_{H^1(\Omega)}=(\frac12-\frac1{p+1})\|u\|^{p+1}_{L^{p+1}(\partial\Omega)},
\end{equation}
then \eqref{energylimit}, \eqref{energylimitLower} and
\eqref{energylimitLowerPosNeg} are equivalent to uniform upper and
lower bounds for the energy $E_p$ or for the $L^{p+1}(\partial
\Omega)$-norm, indeed
\begin{eqnarray*}
& \displaystyle\limsup_{p\rightarrow +\infty}\  2 pE_p(u_p) =
\limsup_{p\rightarrow +\infty}\  p\int_{\partial\Omega} |u_p|^{p+1}
\, d\sigma(x) = \limsup_{p\rightarrow +\infty}\  p\int_{\Omega}
|\nabla u_p|^{2}+u_p^2 \, dx\leq C
\\
&\displaystyle \liminf_{p\rightarrow +\infty}\ 2
pE_p(u_p)=\liminf_{p\rightarrow +\infty}\ p\int_{\partial\Omega}
|u_p|^{p+1} \, d\sigma(x) = \liminf_{p\rightarrow +\infty}\
p\int_{\Omega} |\nabla u_p|^{2}+u_p^2 \, dx\geq 2\pi e
\end{eqnarray*}
and  if $u_p$ is sign-changing, also
\begin{eqnarray*}
&\displaystyle \liminf_{p\rightarrow +\infty}\ 2
pE_p(u_p^{\pm})=\liminf_{p\rightarrow +\infty}\
p\int_{\partial\Omega} |u_p^{\pm}|^{p+1} \, d\sigma(x) =
\liminf_{p\rightarrow +\infty}\ p\int_{\Omega} |\nabla
u_p^{\pm}|^{2}+(u^{\pm}_p)^2 \, dx\geq 2\pi e,
\end{eqnarray*}
we will use all these equivalent formulations throughout the paper.
\

\

Observe that by the assumption in \eqref{energylimit} we have that
\[E_p(u_p)\rightarrow 0, \ \ \|u_p\|_{H^1(\Omega)}\rightarrow 0,\ \  \mbox{as $p\rightarrow +\infty$}\]
\[E_p(u_p^{\pm})\rightarrow 0, \ \ \| u_p^{\pm}\|_{H^1(\Omega)}\rightarrow 0,\ \  \mbox{as
$p\rightarrow +\infty\qquad$ (if $u_p$ is sign-changing)}\] so in
particular $u_p^{\pm}\rightarrow 0$ a.e. as $p\rightarrow +\infty$.
\\
In this section, we will show that the solutions $u_p$ do not vanish
as $p\rightarrow +\infty$ (both $u_p^{\pm}$ do not vanish if $u_p$
is sign-changing) and  that moreover, differently with what happens
in higher dimension, they do not blow-up. The last information is a
consequence of the existence of the first bubble which is obtained
by the rescaling respect to the maximum point.
 A uniform upper and lower bounds of the quantity $p\int_{\partial \Omega}u_p^p\
 d\sigma(x)$ is also obtained (see Proposition \ref{teo:BoundEnergia} below).
 All these estimates are required to
 adopt the argument developed in \cite{DavilaDM}. Our key result is the following:
\begin{proposition}\label{teo:BoundEnergia}
Let $(\upp)$ be a family of solutions to \eqref{problem1} satisfying
\eqref{energylimit}. Then
\begin{itemize}

\item[\emph{$(i)$}] (No vanishing on the boundary).\\
\[\|u_p\|_{L^\infty(\partial \Omega)}^{p-1}\geq \lambda_1,\] where $\lambda_1=\lambda_1(\Omega)(>0)$ is
the first eigenvalue of the eigenvalue problem
\begin{equation}\label{eigenvalueproblem}
\left\{\begin{array}{lr}\Delta u= u\qquad  \mbox{ in }\Omega\\
\frac{\partial u}{\partial \nu}=\lambda u\ \mbox{ on }\partial
\Omega
\end{array}\right.
\end{equation}

defined on $H^1(\Omega)$.\\
 If $u_p$ is sign-changing then also $\|u_p^\pm\|_{L^\infty(\partial \Omega)}^{p-1}\geq \lambda_1$.
\\Moreover
\begin{equation}\label{soloMaggiore1InGenerale}
\liminf_{p\rightarrow +\infty}\|u_p\|_{L^\infty(\partial
\Omega)}\geq 1 \hbox{ and }\liminf_{p\rightarrow
+\infty}\|u_p\|_{L^\infty(\overline{\Omega})}\geq 1.
\end{equation}

\item[$(ii)$] (Existence of the first bubble).
Let $(x_p^+)_p\subset\partial\Omega$ such that
$|u_p(x_p^+)|=\|u_p\|_{L^\infty(\partial \Omega)}$. Let us set
\begin{equation}\label{muppiu}
\mu_{p}^+:=\left(p |\upp(x_p^+)|^{p-1}\right)^{-1}
\end{equation}
and  for $
t\in\widetilde{\Omega}_{p}^+:=\{t\in\overline{\R^2_+}\,:\,y_p+\mu_{p}^+t\in\Psi(\overline{\Omega}\cap
B_R(x_p))\}$
\begin{equation}\label{scalingMax}
z_{p}(t):=\fr{p}{\upp(x_p^+)}\bigg(\upp\big(\Psi^{-1}(y_p+\mu_{p}^+
t)\big)-\upp(x_{p}^+)\bigg),
\end{equation}
where $y_p=\Psi(x_p^+)$ and $\Psi$ is the change of coordinates
introduced in \eqref{changement}.\\
Then $\mu_{p}^+\to0$ as $p\to+\infty$ and
\[z_{p}\To U\mbox{ in }C^1_{loc}(\overline{\R^2_+})\mbox{ as }p\to+\infty\] where
\begin{equation}\label{v0}
U(t_1,t_2)=\log\left(\frac{4}{t_1^2+(t_2+2)^2}\right)
\end{equation}
is the solution of the Liouville problem \eqref{Liouvilleproblem}
satisfying $U(0)=0$.
\item[\emph{$(iii)$}] (No blow-up). There exists $C>0$ such that
\begin{equation}
\label{boundSoluzio}\|\upp\|_{L^\infty(\overline{\Omega})}\leq C,\
\mbox{ for all $p>1$.}
\end{equation}
\item[\emph{$(iv)$}] There exist constants $c,C>0$, such that for all $p$ sufficiently large we have
\begin{equation}
\label{boundEnergiap}c\leq p\int_{\partial\Omega} |u_p|^p d\sigma(x)
\leq C.
\end{equation}
\item[\emph{$(v)$}] $\sqrt{p}u_p\rightharpoonup 0$ in $H^1(\Omega)$ as $p\rightarrow +\infty$.
\end{itemize}
\end{proposition}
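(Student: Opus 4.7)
The plan is to establish the five items in the stated order, since (iii)--(v) all rest on the blow-up analysis of (ii). For (i), I would test the weak formulation of \eqref{problem1} against $u_p$ itself (and, in the sign-changing case, separately against $u_p^{\pm}$) to obtain
\[
\int_\Omega |\nabla u_p|^2+u_p^2\,dx = \int_{\partial\Omega}|u_p|^{p+1}\,d\sigma \leq \|u_p\|_{L^\infty(\partial\Omega)}^{p-1}\int_{\partial\Omega}u_p^2\,d\sigma.
\]
Combined with the Rayleigh-quotient characterization $\lambda_1\int_{\partial\Omega}u_p^2\,d\sigma\leq \int_\Omega |\nabla u_p|^2+u_p^2\,dx$ coming from \eqref{eigenvalueproblem}, this yields $\|u_p\|_{L^\infty(\partial\Omega)}^{p-1}\geq \lambda_1$; taking $(p-1)$-th roots gives $\liminf_p\|u_p\|_{L^\infty(\partial\Omega)}\geq 1$. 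Since any nontrivial solution of $\Delta u = u$ must attain its positive supremum and the absolute value of its negative infimum on $\partial\Omega$ (at an interior maximum $\Delta u\leq 0$ forces $u\leq 0$, and symmetrically for interior minima), one obtains $\|u_p\|_{L^\infty(\overline\Omega)}=\|u_p\|_{L^\infty(\partial\Omega)}$, finishing \eqref{soloMaggiore1InGenerale}.

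The main obstacle is (ii). I would adapt the blow-up argument of Castro \cite{Castro1}, using only \eqref{energylimit} rather than the least-energy hypothesis. First, $\mu_p^+\to 0$ follows from (i), because $p|u_p(x_p^+)|^{p-1}\geq p\lambda_1\to\infty$. In the straightened coordinates $\Psi$, a direct computation shows that $z_p$ solves a perturbed Laplace equation in the interior whose right-hand side $(\mu_p^+)^2 p\, u_p/u_p(x_p^+)$ tends to $0$ uniformly on compacts, together with the Neumann condition $\partial z_p/\partial\nu_t=(1+z_p/p)^p$ on the flattened portion of $\partial\Omega$ (replacing $u_p$ by $-u_p$ if necessary so that $u_p(x_p^+)>0$). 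Because $x_p^+$ realizes the maximum of $|u_p|$, one has $z_p\leq 0$ on any fixed compact subset of $\overline{\R^2_+}$ for $p$ large, so the boundary data is uniformly bounded by $1$. Standard elliptic regularity for mixed boundary value problems on half-disks then yields $C^{1,\alpha}_{\mathrm{loc}}(\overline{\R^2_+})$-compactness of $(z_p)$, and any limit $U$ satisfies $\Delta U=0$ in $\R^2_+$, $\partial U/\partial\nu=e^U$ on $\partial\R^2_+$, with $U(0)=0$ and $\sup U<\infty$. The classification of half-space Liouville solutions with finite supremum (Zhang, Li--Zhu) then forces $U$ to coincide with the explicit bubble \eqref{v0} and gives $\int_{\partial\R^2_+}e^U=2\pi$.

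Items (iii), (iv), (v) are then short consequences. For (iii), I argue by contradiction: if $|u_p(x_p^+)|\to\infty$ along a subsequence, the change of variable $t=(\Psi(\cdot)-y_p)/\mu_p^+$ together with the identity $p\mu_p^+|u_p(x_p^+)|^{p+1}=|u_p(x_p^+)|^2$ and Fatou's lemma give
\[
p\int_{\partial\Omega}|u_p|^{p+1}\,d\sigma \geq |u_p(x_p^+)|^2\int_{B_R\cap\partial\R^2_+}(1+z_p/p)^{p+1}\,dt\,(1+o(1))\to+\infty,
\]
contradicting \eqref{energylimit}; combined with the boundary-to-interior equality from (i) this proves \eqref{boundSoluzio}. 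For (iv), the upper bound is obtained by applying Hölder's inequality to pass from $p\int|u_p|^{p+1}$ (controlled by \eqref{energylimit}) to $p\int|u_p|^{p}$, and the lower bound from the same bubble computation with exponent $p$ in place of $p+1$, using $p\mu_p^+|u_p(x_p^+)|^p=|u_p(x_p^+)|$ together with (i). Finally, (v) follows because $\|\sqrt p\,u_p\|_{H^1(\Omega)}^2=p\|u_p\|_{H^1(\Omega)}^2\leq C$ gives weak compactness, and for any test function $\varphi\in C^\infty(\overline\Omega)$ the boundary contribution is bounded by $C\|\varphi\|_\infty\cdot (p\int_{\partial\Omega}|u_p|^p)/\sqrt p\to 0$ by (iv), so any weak limit $v$ solves $\Delta v=v$ in $\Omega$ with $\partial v/\partial\nu=0$ on $\partial\Omega$ and is therefore identically zero.
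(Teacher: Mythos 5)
Your items (i), (iii), (iv) and (v) are essentially the paper's arguments (for the lower bound in (iv) you use the bubble with exponent $p$ instead of invoking the known lower bound $\liminf p\int_{\partial\Omega}|u_p|^{p+1}\geq 2\pi e$; both work). The problem is in (ii), which you correctly identify as the main obstacle but then dispose of with a step that does not hold.

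The gap: from $z_p\leq 0$ and $z_p(0)=0$ you conclude that ``standard elliptic regularity for mixed boundary value problems on half-disks yields $C^{1,\alpha}_{\mathrm{loc}}$-compactness of $(z_p)$'' because the Neumann data $|1+z_p/p|^{p-1}(1+z_p/p)$ is bounded by $1$. But elliptic estimates for the mixed problem control $\|z_p\|_{W^{2,q}}$ (or $C^{1,\alpha}$) in terms of the data \emph{and} of $\|z_p\|_{L^q}$ on a slightly larger half-ball, and no a priori bound on $z_p$ itself is available at this stage: $z_p$ is bounded above by $0$, not below, and nothing so far excludes $z_p\to-\infty$ locally uniformly away from the origin (which would destroy the convergence to the bubble). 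Obtaining the local lower bound is the heart of the blow-up argument. The paper handles it by introducing an auxiliary function $w_p$ solving the linearized mixed problem with the same (bounded) data, so that $\varphi_p=w_p-z_p+\|w_p\|_{\infty}\geq 0$; one then reflects $\varphi_p$ evenly across the flat boundary, applies the Harnack inequality to the reflected nonnegative supersolution, and uses $\varphi_p(0)\leq \|w_p\|_\infty+ \|w_p\|_{L^\infty}$ (since $z_p(0)=0$) to bound $\varphi_p$ in $L^a_{\mathrm{loc}}$, after which interior and boundary regularity give the $C^{1,\alpha}$ bound on $z_p$. Some such Harnack/auxiliary-function step is indispensable and is missing from your proposal.

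A secondary point: you invoke the classification of half-space Liouville solutions assuming only $U\leq 0$ (hence $\sup U<\infty$). The classification you cite requires the integrability $\int_{\partial\R^2_+}e^{U}<\infty$; without it there are other solutions bounded above (e.g.\ the affine solutions $U=-e^{d}t_2+d$). The paper derives $e^U\in L^1(\partial\R^2_+)$ separately, via Fatou's lemma, the change of variables back to $\partial\Omega$, and the energy bound \eqref{energylimit}; you should add this step before applying the classification.
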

\begin{proof}
Point \emph{$(i)$} has been first proved for positive solutions in
\cite{Takahashi}, here we follow the proof in  \cite[Proposition
2.5]{GrossiGrumiauPacella1}. If $u_p$ is sign-changing, just observe
that  $u_p^{\pm}\in H^1(\Omega)$, where we know that
\[0<2\pi e-\varepsilon \overset{\eqref{energylimitLower}/\eqref{energylimitLowerPosNeg}}{\leq}
 \int_{\Omega}|\nabla u_p^{\pm}|^2+(u_p^{\pm})^2\,dx\overset{\eqref{energylimit}}{\leq} C<+\infty\] and
 that also by Poincar\'e inequality
 \begin{eqnarray*}
\int_{\Omega} |\nabla
u_p^{\pm}|^2+(u_p^{\pm})^2\,dx=\int_{\partial\Omega}|u_p^{\pm}|^{p+1}\,d\sigma(x)&\leq&
\|u_p^{\pm}\|_{L^\infty(\partial
\Omega)}^{p-1}\int_{\partial\Omega}|u_p^{\pm}|^2\,d\sigma(x)\\&\leq&
\frac{\|u_p^{\pm}\|_{L^{\infty}(\partial\Omega)}^{p-1}}{\lambda_1(\Omega)}\int_{\Omega}|\nabla
u_p^{\pm}|^2+(u_p^{\pm})^2\,dx.
\end{eqnarray*}
Hence $\|u_p^{\pm}\|_{L^\infty(\partial
\Omega)}^{p-1}\geq\lambda_1(\Omega)$ and
$\|u_p^{\pm}\|_{L^\infty(\overline{
\Omega})}^{p-1}\geq\|u_p^{\pm}\|_{L^\infty(\partial
\Omega)}^{p-1}\geq\lambda_1(\Omega)$.\\
If $u_p$ is not
sign-changing just observe that either $u_p=u_p^+$ or $u_p=u_p^-$
and the same proof as before applies.

\

The proof of \emph{$(ii)$} follows the same ideas in \cite{Castro1}
where the same result has been proved for least energy (positive)
solutions. In the sequel we will adopt the same method in \cite{BFS}
and \cite{Castro1} based on flattening the boundary near the maximum
point then using a classical blow up argument introduced in
\cite{AdiGrossi}. We start by the following crucial lemma.
\begin{lemma}\label{maximum}
Let $u_p$ be a nontrivial solution of \eqref{problem1} and the
sequence $(x_p^+)\in \overline{\Omega}$ s.t.
$|u_p(x_p^+)|=\|u_p\|_{L^{\infty}(\overline{\Omega})}$. Then
$x_p^+\in \partial \Omega$ for each integer $p\geq2 $.
\end{lemma}
\begin{proof}
We argue by contradiction. Suppose that there exists $p\geq 2$ such that $x_p^+\in\Omega$.\\
Recall that $x_p^+$ is a point where $|u_p|$ achieves its maximum.
Without loss of generality, we can assume that:
$$u_p(x_p^+)=\displaystyle\max_{\overline{\Omega}} u_p > 0.$$
Hence $x_p^+$ is an interior local maximum and $u_p(x_p^+)>0$. By
continuity of $u_p$, there exists $r>0$ such that $u_p(x)>0$, for
each $x\in B_r(x_p^+)$ and from \eqref{problem1} we get $\Delta u_p
>0$ in $B_r(x_p^+)$. Maximum principle implies that $u_p$ is a
constant function in $B_r(x^+_p)$. Therefore $u_p=\Delta u_p =0$ in $B_r(x_p^+)$ which contradicts $u_p(x_p^+)>0$.\\
If $u_p(x_p^+)<0$ then $x_p^+$ is a minimum and a similar argument
holds.
\end{proof}
Let $M_p=\max_{\overline{\Omega}}|u_p| $ and let $x^+_p$ be a
maximum point of $|u_p|$. Without loss of generality, we may assume
that $$u_p(x_p^+)=\displaystyle\max_{\overline{\Omega}} u_p
>0.$$ By $(i)$ we have that $pu_p(x_p^+)^{p-1}\rightarrow +\infty$ as $p\rightarrow
+\infty$, so \eqref{soloMaggiore1InGenerale} holds and moreover
$\mu_{p}^+\rightarrow 0$, where $\mu_p^+$ is defined in
\eqref{muppiu}.\\
From Lemma \ref{maximum}, we have $x^+_p \in
\partial \Omega$ and $\|u_p\|_{L^\infty(\overline{\Omega})}=\|u_p\|_{L^\infty(\partial{\Omega})}$. Up to a subsequence, $x_p^+$ converges to some
$\overline{x}\in \partial \Omega $. Assume that $\overline{x}$ is
located in the origin and the unit outward normal to $\partial
\Omega$ at $0$ is $(-e_2)$ where $e_2$ is the second element of a
canonical basis in $\mathbb{R}^2$. It will be convenient to work in
fixed half balls. For this reason, we
need some change of coordinates. This program was done in many works (see for example \cite{E} and \cite{RR}).\\
Since we will assume that $\partial \Omega$ is a $C^2$ surface, we
know that there is an $R> 0$ and a $C^2(\mathbb{R})$ function $\rho$
such that (after a possible renumbering and reorientation of
coordinates)
$$\partial \Omega \cap B_R(0)=\{x \in B_R(0): \quad x_2=\rho(x_1)\}$$
$$ \Omega \cap B_R(0) = \{x\in B_R(0): \quad x_2 >\rho(x_1)\}$$
and moreover, the mapping $$B_R(0) \ni x \mapsto y = \Psi(x)\in
\mathbb{R}^2$$ defined by
\begin{eqnarray}\label{changement}
\left\{\begin{array}{lll} y_1&:=&x_1,\\
y_2&:=&x_2-\rho(x_1),
\end{array}
\right.
\end{eqnarray}
is one-to-one. Define $\Phi := \Psi^{-1}$. Note that $\Psi$ is a
$C^2$ function that transforms the set $\Omega ' := \Omega\cap
B_R(0)$ (in what we refer to as $x$ space) into a set $\Omega "$ in
the half-space $y_n> 0$ (of $y$ space). Note also that the point
$\overline{x}=0$ is
mapped to the origin of $y$ space.\\
Our task now is changing the partial differential equation
\eqref{problem1} satisfied by $u_p$ in $\Omega '$
 into $y$ coordinates. We define
$$\widetilde{u}_p(y):= u_p(\Phi(y)), \quad \hbox{for all }y\in \Omega
" .$$ Let $\varphi \in \mathcal{D}(B_R(0)\cap \overline{\Omega})$.
Multiplying \eqref{problem1} by $\varphi$, integrating by part over
$B_R(0)\cap \Omega$ and using the change of variable $x=\Phi(y)$, we
find
\begin{align}\label{IBP}
\displaystyle &\int_{\Omega "}\nabla u_p(\Phi(y)).\nabla
\varphi(\Phi(y))+u_p(\Phi(y))\varphi(\Phi(y))dy\nonumber\\
&= \int_{\partial \Omega "\cap\partial \mathbb{R}_+^2}|
u_p(\Phi(y_1,0))|^{p-1}u_p(\Phi(y_1,0)) \varphi(\Phi(y_1,0))dy_1.
\end{align}
Let $\varphi_1(y):=\varphi(\Phi (y)), \ \hbox{for each }y\in \Omega
"$. A simple computation shows that
\begin{equation}\label{changementgradient} \nabla u_p(\Phi (y))=\nabla
\widetilde{u}_p(y)-\bigg(\rho'(y_1)\frac{\partial
\widetilde{u}_p}{\partial y_2}(y),0\bigg).
\end{equation}
The above relation holds also for
$\varphi$ and $\varphi_1$.\\
Using \eqref{IBP}, \eqref{changementgradient} and Green's formula,
we can prove that the
 functions $\widetilde{u}_p$ satisfy the following problem
\begin{equation}\label{chv}
\left\{\begin{array}{lll} \displaystyle \Delta
 \widetilde{u}_p-\widetilde{u}_p-2 \rho'(y_1)
  \frac{\partial ^2\widetilde{u}_p}{\partial y_1\partial y_2}-\rho''(y_1)
  \frac{\partial \widetilde{u}_p}{\partial y_2}+( \rho'(y_1))^2
   \frac{\partial ^2 \widetilde{u}_p}{\partial y_2^2}=0\quad \hbox{in }\Omega ",\\
\displaystyle\frac{\partial \widetilde{u}_p}{\partial \nu}+ \rho'
(y_1)
  \frac{\partial \widetilde{u}_p}{\partial y_2}-( \rho'(y_1))^2
\frac{\partial \widetilde{u}_p}{\partial
y_2}=|\widetilde{u}_p|^{p-1}\widetilde{u}_p\quad\hbox{on }\partial
\Omega "\cap\partial \mathbb{R}_+^2.
\end{array}
\right.
\end{equation}
Let $\overline{R}$ be such that $B_{\overline{R}}(0)\cap\{y |
y_2>0\}\subset\Omega"$ and define
$B^+_{\overline{R}}(0):=B_{\overline{R}}(0)\cap\mathbb{R}^2_+$ and
$D_{\overline{R}}(0):=B_{\overline{R}}(0)\cap\partial\mathbb{R}^2_+$
(the flat boundary of $B_{R}^+(0)$). In particular we can look at
problem \eqref{chv} as being defined only in the half-ball
$B^+_{\overline{R}}(0)$, that is
\begin{equation}\label{pbrbar}
\left\{\begin{array}{lll} \displaystyle \Delta
 \widetilde{u}_p-\widetilde{u}_p-2 \rho'(y_1)
  \frac{\partial ^2\widetilde{u}_p}{\partial y_1\partial y_2}-\rho''(y_1)
  \frac{\partial \widetilde{u}_p}{\partial y_2}+( \rho'(y_1))^2
   \frac{\partial ^2 \widetilde{u}_p}{\partial y_2^2}=0\quad \hbox{in }B_{\overline{R}}^+(0),\\
\displaystyle\frac{\partial \widetilde{u}_p}{\partial \nu}+ \rho'
(y_1)
  \frac{\partial \widetilde{u}_p}{\partial y_2}-( \rho'(y_1))^2
\frac{\partial \widetilde{u}_p}{\partial
y_2}=|\widetilde{u}_p|^{p-1}\widetilde{u}_p\quad\hbox{on
}D_{\overline{R}}(0).
\end{array}
\right.
\end{equation}
Now we perform a classical blow up argument. Let $p_0$ be a
sufficiently large integer such that $y_p=\Psi (x_p^+)\in
B_{\overline{R}/4}(0)$ for all $p>p_0$. Then we consider
\begin{equation*}
z_{p}(t):=\fr{p}{\widetilde{u}_p(y_p)}\big(v_p(y_p+\mu_{p}^+
t)-v_p(y_p)\big), \quad \forall t\in
\overline{B^+}_{\overline{R}/(2\mu_p^+)},\ \forall p>p_0
\end{equation*}
where $\widetilde{u}_p(y_p)=u_p(x_p^+)$. For simplicity we shall
write $x_p$ for $x_p^+$, $\mu_p$ for $\mu_p^+$, $B_{\overline{R}}^+$
for
$B_{\overline{R}}^+(0)$ and $D_{\overline{R}}$ for $D_{\overline{R}}(0)$. \\
The function $z_p$ satisfies the following system
\begin{equation}\label{normalisation}
\left\{\begin{array}{lll} \Delta
 z_p-\mu_p^2z_p-\mu_p^2p-2\displaystyle \rho'(\mu_p t_1+y_{p,1})
  \frac{\partial ^2z_p}{\partial t_1\partial t_2}\\
  \quad \displaystyle-\mu_p\rho''(\mu_p t_1+y_{p,1})
  \frac{\partial z_p}{\partial t_2}-(\rho'(\mu_p t_1+y_{p,1}))^2
   \frac{\partial ^2 z_p}{\partial t_2^2}=0\quad \hbox{in }B_{\mu_p^{-1}\overline{R}/2}^+,\\
\displaystyle\frac{\partial z_p}{\partial \nu}+
 \rho'(\mu_p t_1+y_{p,1})
  \frac{\partial z_p}{\partial t_2}\\
  \quad\displaystyle-(
\rho'(\mu_p t_1+y_{p,1}))^2 \frac{\partial z_p}{\partial
t_2}=|1+\frac{z_p}{p}|^{p-1}(1+\frac{z_p}{p})\quad\hbox{on
}D_{\mu_p^{-1}\overline{R}/2},\\
0<1+\frac{z_p}{p}\leq 1 \quad\hbox{and} \quad z_p(0)=0
\end{array}
\right.
\end{equation}
where $(t_1,t_2)$ is the coordinates of $t$ and $y_{p,1}$ is the
first component of $y_p$. $z_p$ satisfies the following equations
\begin{equation}\label{system}
\left\{\begin{array}{lll} -L_p z_p + \mu_p^2z_p =-\mu_p^2p
\quad \hbox{in }B_{\mu_p^{-1}\overline{R}/2}^+,\\
N_p z_p=|1+\frac{z_p}{p}|^{p-1}(1+\frac{z_p}{p})\quad\hbox{on
}D_{\mu_p^{-1}\overline{R}/2},
\end{array}
\right.
\end{equation}
where $L_p:=\Delta
 -2\displaystyle \rho'(\mu_p t_1+y_{p,1})
  \frac{\partial ^2.}{\partial t_1\partial t_2}
  \displaystyle-\mu_p\rho''(\mu_p t_1+y_{p,1})
  \frac{\partial .}{\partial t_2}-(\rho'(\mu_p t_1+y_{p,1}))^2
   \frac{\partial ^2 .}{\partial t_2^2}$ and
   $N_p:=\displaystyle\frac{\partial\  .}{\partial \nu}+
 \rho'(\mu_p t_1+y_{p,1})
  \frac{\partial \ .}{\partial t_2}\displaystyle-( \rho'(\mu_p t_1+y_{p,1}))^2 \frac{\partial\
.}{\partial t_2}$.
\begin{remark}\label{Remark}
Observe that $\rho'(0)=0$ and the continuity of $\rho''$ imply that
\begin{itemize}
\item$\displaystyle L_p\rightarrow_{p\rightarrow \infty} \Delta$,
\item $\displaystyle N_p\rightarrow_{p\rightarrow \infty} \frac{\partial \ .}{\partial
\nu}$.
\end{itemize}
\end{remark}
For fixed $r>0$ we consider $p_1>p_0$ large enough so that $8\mu_p
r<\overline{R} $ for all $p>p_1$, and consider the problem of
finding $w_p$ solution of
\begin{eqnarray}\label{w}
\left\{\begin{array}{rcl} -L_p w+\mu_p^2 w&=&-p\mu_p^2 \ \quad \quad \hbox{in }B_{4r}^+,\\
N_p w&=&(1+\frac{z_p}{p})^p\quad \hbox{on }D_{4r},\\
w&=&0\quad \ \quad \quad \quad\hbox{on }S_{4r},
\end{array}
\right.
\end{eqnarray}
where $S_{4r}=\partial B_{4r}\cap \mathbb{R}^2_+$ (the curved
boundary of $B_{4r}^+$). Firstly, the existence of such $w_p\in
H^1(B^+ _{4r})$ is guaranteed by Lax-Milgram theorem and it
satisfies
$$\|w_p\|_{H^1(B^+ _{4r})}\leq C\bigg(\|\mu_p^2 p\|_{L^2(B^+ _{4r})}
+\|(1+\frac{z_p}{p})^p\|_{L^2(D _{4r})}\bigg).$$ Moreover, observe
that for each $q \geq 2$, and all $p > p_1$
$$\displaystyle\int_{B^+_{4r}}|\mu_p^2p|^q dt \leq C$$
since we have $r\leq C \mu_p^{-1}$. Also
\begin{eqnarray*}
\displaystyle\int_{D_{4r}}|1+\frac{z_p}{p}|^{pq} d\sigma (t)&\leq&
\int_{D_{\mu_p^{-1}R/2}}|1+\frac{z_p}{p}|^{pq} d\sigma (t)\\
&=&
\mu_p^{-1}\int_{D_{R/2}(y_p)}\fr{|\widetilde{u}_p(y)|^{pq}}{\widetilde{u}_p(y_{p})^{pq}}d\sigma(y)\\&\leq&
\mu_p^{-1}\frac{1}{u_p(x_p)^{pq}}\int_{\partial
\Omega}|u_p(x)|^{pq}d\sigma(x)\\&\leq&
\frac{p}{u_p(x_p)^2}\int_{\partial
\Omega}|u_p(x)|^{p+1}d\sigma(x)\\
&\leq& C,
\end{eqnarray*}
where the last inequality holds from Proposition
\ref{teo:BoundEnergia} $(i)$ and \eqref{energylimit}. Hence using a
result from \cite{Shamir1} we conclude that when $q>4$, $w_p$ must
be in $W^{\frac{1}{2}+t,q}(B_{4r}^+)$ for $0<t<2/q$ with
\begin{equation}\label{estimationw}
\|w_p\|_{W^{\frac{1}{2}+t,q}(B_{4r}^+)}\leq C\bigg(\|\mu_p^2
p\|_{L^q(B^+ _{4r})} +\|(1+\frac{z_p}{p})^p\|_{L^q(D
_{4r})}\bigg)\leq C,
\end{equation}
where the constant $C$ is independent of $p$ since the coefficients
of the operator $(L_p,N_p)$ were uniformly bounded. Furthermore,
\eqref{estimationw} implies that $w_p$ is $L^\infty$ bounded. \\
Consider now the function $\varphi_p := w_p - z_p +
\|w_p\|_{L^\infty(B^+ _{4r})}$ which solves
\begin{eqnarray*}
\left\{\begin{array}{rcl} -L_p \varphi+\mu_p^2 \varphi&=&\mu_p^2
\|w_p\|_{L^\infty(B^+ _{4r})}\
\quad \quad \hbox{in }B_{4r}^+,\\
N_p \varphi&=&0\quad \quad \quad\hbox{on }D_{4r},\\
\varphi&\geq&0\quad \ \quad \quad \quad\hbox{in }B^+_{4r}.
\end{array}
\right.
\end{eqnarray*}
Note that, for p large, we have $N_p \varphi = 0$ is equivalent to
$\frac{\partial \varphi}{ \partial \nu }=0$ since the function
$(t_1,t_2)\mapsto\rho'(\mu_p t_1+y_{p,1})$ converges uniformly to
$0$. Hence the function $\varphi_p$ satisfies
\begin{eqnarray*}
\left\{\begin{array}{rcl} -L_p \varphi+\mu_p^2 \varphi&=&\mu_p^2
\|w_p\|_{L^\infty(B^+ _{4r})}\
\quad \quad \hbox{in }B_{4r}^+,\\
\frac{\partial \varphi}{ \partial \nu }&=&0\quad \quad \quad\hbox{on }D_{4r},\\
\varphi&\geq&0\quad \ \quad \quad \quad\hbox{in }B^+_{4r}.
\end{array}
\right.
\end{eqnarray*}
For $t=(t_1,t_2)\in B_{4r}$, we define the function
$$\hat{\varphi}_p=
\left\{\begin{array}{lll} \varphi_p(t)&\hbox{ if}&t_2\geq 0,\\
\varphi_p(t_1,-t_2)&\hbox{ if}&t_2< 0.
\end{array}
\right.
$$
Clearly $\hat{\varphi}_p$ is a non-negative solution of  $-L_p
\varphi+\mu_p^2 \varphi=\mu_p^2 \|w_p\|_{L^\infty(B^+ _{4r})}$ in
$B_{4r}$. Applying Harnack inequality (\cite[Theorem 4.17]{GT}), we
obtain for every $a\geq 1$
\begin{eqnarray*}
\left(\frac{1}{|B_{3r}|}\int_{B_{3r}}
\hat{\varphi}_p^a\right)^\frac{1}{a}&\leq& C\left\{\inf_{B_{3r}}
\hat{\varphi}_p+\left\|\mu_p^2 \|w_p\right\|_{L^\infty(B^+
_{4r})}\|_{L^2(B _{4r})}\right\}\\
&\leq& C \left\{\varphi_p(0)+\left\|\mu_p^2
\|w_p\right\|_{L^\infty(B^+ _{4r})}\|_{L^2(B _{4r})}\right\}\\
&\leq& C
\end{eqnarray*}
where we have used the facts that $z_p(0) = 0$ and $w_p$ is
uniformly bounded in $B_{4r}^+$.
 By interior
elliptic regularity (see for instance \cite[Theorem 9.13]{GT}) now
we obtain that
$$\|\hat{\varphi}_p\|_{W^{2,q}(B_{2 r})}\leq C \left( \left\|\mu_p^2 \|w_p\|_{L^\infty(B^+ _{4r})}\right\|_{L^q(B_{3r})}
+ \|\hat{\varphi}_p\|_{L^q(B_{3r})}\right)\leq C.$$
Hence, we get that
\begin{equation}\label{estimationvarphi}
 \varphi_p
 \hbox{ is uniformly bounded in }
W^{2,q}(B^+ _{2r}) \hbox{ for }q>1.\end{equation} It follows using
\eqref{estimationw} and \eqref{estimationvarphi} that
\begin{equation}\label{estimationz}
\|z_p\|_{W^{\frac{1}{2}+t,q}(B_{2r}^+)}\leq C
\end{equation}
for $q>4$, $0<t<2/q$ and any $p>p_1$.
 Finally, Shauder regularity will
tell us that $z_p$ is bounded in $C^{1,\alpha}(B^+_ r )$ for some $0
< \alpha < 1$, independently of $p> 1$ large. Thus by Arzela-Ascoli
Theorem and a diagonal process on $r\rightarrow \infty$, after
passing to a subsequence
\begin{equation}\label{convergence1}
z_p\rightarrow U \mbox{ in }  C^1_{loc} (\overline{\mathbb{R}^2_+})
\mbox{ as }p\rightarrow +\infty.
\end{equation}
Since $ \rho' (0)=0$ and $\mu_p\rightarrow0$ and by using Remark
\ref{Remark}, we conclude that $U$ satisfies the following problem
\begin{equation}\label{limitproblem1}
\left\{\begin{array}{lr}\Delta U= 0\qquad  \mbox{ in }\R^2_+\\
\frac{\partial U}{\partial \nu}=e^U\ \mbox{ on }\partial \R^2_+.
\end{array}\right.
\end{equation}
Moreover, we have $U(0)=0$ and $U\leq 0$. \\
In the sequel, we need the following result:
\begin{lemma}\label{boundness}
 Let $U$ satisfy \eqref{convergence1} and
\eqref{limitproblem1}, then we have
 $$\displaystyle\int_{\partial \mathbb{R}^2_+} e^U < \infty.$$
\end{lemma}
\begin{proof}
For any $R>0$ and each $|t_1|<R$, we have
$$\displaystyle(p+1)[\log |1+\frac{z_p(t_1,0)}{p}|-\frac{z_p(t_1,0)}{p+1}]\longrightarrow_{p\rightarrow+\infty} 0 $$
So we can use Fatou's Lemma to write
\begin{eqnarray*}
\displaystyle\int_{-R}^Re^{U(t_1,0)}dt_1&\stackrel{\eqref{convergence1}\,+\,\textrm{{Fatou}}}{\leq}
&
\int_{-R}^Re^{z_p(t_1,0)-(p+1)[\log |1+\frac{z_p(t_1,0)}{p}|-\frac{z_p(t_1,0)}{p+1}]}dt_1+o_p(1)\\
&\leq&\int_{D_R(0)}|1+\frac{z_p(t)}{p}|^{p+1}d\sigma(t)+o_p(1)\\
&\leq&\int_{D_R(0)}\fr{|\widetilde{u}_p(y_{p}+\mu_{p}t)|^{p+1}}{\widetilde{u}_p(y_{p})^{p+1}}d\sigma(t)+o_p(1)\\
&\leq&\mu_p^{-1}\int_{D_{R\mu_{p}}(y_p)}\fr{|\widetilde{u}_p(y)|^{p+1}}{\widetilde{u}_p(y_{p})^{p+1}}d\sigma(y)+o_p(1)\\
&\leq&\fr p{\|u_p\|_{\infty}^2}\int_{\partial \Omega}|\upp(x)|^{p+1}d\sigma(x)+o_p(1)\\
&\stackrel{\eqref{soloMaggiore1InGenerale}}{\leq}&\fr
p{(1-\ep)^2}\int_{\partial\Omega}|\upp(x)|^{p+1}
d\sigma(x)+o_p(1)\stackrel{\eqref{energylimit}}{\leq} C<+\infty,
\end{eqnarray*}
so that $e^{U}\in L^1(\partial\R^2_+)$.
\end{proof}
 Recall that $U$ is a non positive solution of \eqref{limitproblem1}. Using Lemma \ref{boundness},
  $U$ satisfies the Liouville problem \eqref{Liouvilleproblem}.
  By virtue of the classification due to P. Liu \cite{Liu} (see also
  \cite[Theorem 1.3]{GL}), the solution $U$ must be of the form
$$U(t_1,t_2)=\log \frac{2\mu_2}{(t_1-\mu_1)^2+(t_2+\mu_2)^2},$$
for some $\mu_2
> 0$ and $\mu_1 \in \mathbb{R}$. Since $U(0)=0$ and $U\leq 0$, arguing as in \cite{Castro1} (see page 8) we obtain \eqref{v0}. Last an easy computation shows
that $\int_{\partial \mathbb{R}^2_+}e^{U}=2\pi$.

\

Point \emph{$(iii)$} has been first proved in \cite{Takahashi} using
same ideas contained in \cite{RenWeiTAMS1994}, here we write a
simpler proof which follows directly  from (ii) by applying Fatou's
lemma. An analogous argument can be found in \cite{Castro1} arguing
as in \cite[Lemma 3.1]{AdiGrossi}. Indeed, for each $p>p_0$ we have
\begin{eqnarray*}
\displaystyle\|u_p\|_{\infty}^2 2\pi=\|u_p\|_{\infty}^2
 \int_{\partial\R^2_+}e^{U(t)}d\sigma (t)&\overset{\mbox{\footnotesize{\emph{$(ii)$}-Fatou}}}{\leq}&
\|u_p\|_{\infty}^2\int_{D_{\mu_p^{-1}R}}\left|1+\frac{z_{p}(t)}{p}
 \right|^{p+1}d\sigma(t)\\
&\leq&\|u_p\|_{\infty}^2\int_{D_{\mu_p^{-1}R}}\fr{|\widetilde{u}_p(y_{p}+\mu_{p}t)|^{p+1}}{\widetilde{u}_p(y_{p})^{p+1}}d\sigma(t)\\
&\leq&\|u_p\|_{\infty}^2\mu_p^{-1}\int_{D_{R}(y_p)}\fr{|\widetilde{u}_p(y)|^{p+1}}{\widetilde{u}_p(y_{p})^{p+1}}d\sigma(y)\\
&\stackrel{\eqref{muppiu}}{\leq}&
p\int_{\partial\Omega}|\upp(x)|^{p+1}
d\sigma(x)\stackrel{\eqref{energylimit}}{\leq} C<+\infty,
\end{eqnarray*}
where $R$ is chosen such that $R\leq \frac{\overline{R}}{2}$. \

\

\emph{$(iv)$} follows directly from  \emph{$(iii)$}. Indeed on the
one hand
\[
0<C\overset{\eqref{energylimitLower}-\eqref{energiaSuSoluzioni}}{\leq}
p\int_{\partial\Omega} |u_p|^{p+1}\,
d\sigma(x)\leq\|u_p\|_{\infty}p\int_{\partial\Omega} |u_p|^p \,
d\sigma(x)\stackrel{\emph{$(iii)$}}{\leq}C p\int_{\partial\Omega}
|u_p|^p \, d\sigma(x)
\]
On the other hand by H\"older inequality
\[
p\int_{\partial\Omega} |u_p|^p\, d\sigma(x) \leq
|\partial\Omega|^{\frac{1}{p+1}} p\left(\int_{\partial\Omega}
|u_p|^{p+1}\,d\sigma(x)\right)^{\frac{p}{p+1}}\stackrel{\eqref{energylimit}}{\leq}C.
\]

\

\

To prove \emph{$(v)$} we need  \emph{$(iv)$}. Indeed let us note
that, since \eqref{energylimit} holds,
 there exists $w\in H^1(\Omega)$ such that, up to a subsequence,  $\sqrt{p}u_p \rightharpoonup  w$ in $H^1(\Omega)$.
  We want to show that $w=0$ a.e. in $\Omega$.

Using the equation \eqref{problem1}, for any test function
$\varphi\in C^{\infty}(\overline{\Omega})$, we have
\[
\big|\int_{\Omega}\nabla (\sqrt{p}u_p)\nabla\varphi +
\sqrt{p}u_p\varphi\,dx \big|=
\sqrt{p}\big|\int_{\partial\Omega}|u_p|^{p-1}u_p\varphi\,
d\sigma(x)\big| \leq\frac{\|\varphi\|_{\infty}}{\sqrt{p}} p
\int_{\partial\Omega}|u_p|^{p}\,d\sigma(x)
\overset{(iv)}{\leq}\frac{\|\varphi\|_{\infty}}{\sqrt{p}} C
\]
for $p$ large. Hence
\[
\int_{\Omega}\nabla w\nabla\varphi+w\varphi\,dx=0\quad
\forall\varphi\in C^{\infty}(\overline{\Omega}),
\]
which implies that
 $w=0$ a.e. in $\Omega$.
\end{proof}
\section{Proof of Theoerem \ref{theorem}}

We start with the following interesting result contained in
\cite{DavilaDM}, which is a variant of an estimate of Brezis and
Merle \cite{Brezis-Merle}.
\begin{lemma}\label{BM}
Consider the linear equation
\begin{equation}\label{linear equation}
\left\{\begin{array}{lr}\Delta u= u\qquad  \mbox{ in }\Omega\\
\frac{\partial u}{\partial \nu}=h \qquad\mbox{ on }\partial \Omega
\end{array}\right.
\end{equation}
with $h\in L^1(\partial\Omega)$.\\
For any $0 < k < \pi$ there exists a constant $C$ depending on $k$
and $ \Omega$ such that for any $h \in L^1(\partial \Omega)$ and $u$
the solution of \eqref{linear equation} we have
$$\displaystyle\int_{\partial \Omega}\exp\left[\frac{k|u(x)|}{\|h\|_{L^1(\partial \Omega)}}\right]\ d\sigma(x) \leq C.$$
\end{lemma}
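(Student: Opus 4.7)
The plan is to represent the solution $u$ through the Neumann Green's function and then reduce the estimate to a Jensen--Fubini argument on the logarithmic singularity, in the spirit of Brezis--Merle but adapted to the one-dimensional boundary.

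First, since $u$ solves \eqref{linear equation}, we have the representation
\[
u(x)=\int_{\partial\Omega}G(x,y)\,h(y)\,d\sigma(y),\qquad x\in\overline\Omega,
\]
where $G$ is the Green's function defined by \eqref{Greenequation}. Using the decomposition \eqref{regularpart} and the fact that the regular part $H(x,y)$ is bounded on $\partial\Omega\times\partial\Omega$, one obtains for $x\in\partial\Omega$
\[
|u(x)|\le \frac{1}{\pi}\int_{\partial\Omega}\log\frac{D}{|x-y|}\,|h(y)|\,d\sigma(y)+C_{\Omega}\,\|h\|_{L^1(\partial\Omega)},
\]
where $D$ is the diameter of $\Omega$ (chosen so that $\log(D/|x-y|)\ge 0$), and $C_\Omega$ depends only on $\Omega$.

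Next I would normalize: set $d\mu(y):=|h(y)|\,d\sigma(y)/\|h\|_{L^1(\partial\Omega)}$, a probability measure on $\partial\Omega$. Dividing the above inequality by $\|h\|_{L^1}$, multiplying by $k$, and applying Jensen's inequality to the convex function $e^{\,\cdot\,}$ yields
\[
\exp\!\left[\frac{k|u(x)|}{\|h\|_{L^1(\partial\Omega)}}\right]
\le e^{kC_\Omega}\int_{\partial\Omega}\!\left(\frac{D}{|x-y|}\right)^{k/\pi}\!\!d\mu(y).
\]
Integrating in $x$ over $\partial\Omega$ and swapping the order of integration via Fubini gives
\[
\int_{\partial\Omega}\exp\!\left[\frac{k|u(x)|}{\|h\|_{L^1(\partial\Omega)}}\right]d\sigma(x)
\le e^{kC_\Omega}\sup_{y\in\partial\Omega}\int_{\partial\Omega}\!\left(\frac{D}{|x-y|}\right)^{k/\pi}\!\!d\sigma(x).
\]

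The final step is to observe that the boundary $\partial\Omega$ is a smooth $1$-dimensional manifold, so the integral $\int_{\partial\Omega}|x-y|^{-k/\pi}d\sigma(x)$ is finite uniformly in $y\in\partial\Omega$ precisely when $k/\pi<1$, i.e., $k<\pi$; this is where the threshold comes from and explains the dependence of $C$ on $k$ (blowing up as $k\uparrow\pi$) and on $\Omega$.

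The only nontrivial point in the argument is the Green's function estimate: one needs the splitting $G(x,y)=\tfrac{1}{\pi}\log\tfrac{1}{|x-y|}+H(x,y)$ to hold with $H$ bounded on $\partial\Omega\times\partial\Omega$, which is standard for the smooth Neumann problem in the plane but is the step that determines the constant $1/\pi$ and hence the sharp threshold $\pi$. Once this is in hand, Jensen + Fubini finishes the proof.
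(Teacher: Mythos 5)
Your argument is correct and is essentially the proof of this lemma: the paper itself does not prove it but cites \cite{DavilaDM}, where the estimate is established by exactly this Brezis--Merle-type scheme (Green's representation with the boundary singularity $\tfrac{1}{\pi}\log\tfrac{1}{|x-y|}$ plus bounded regular part, Jensen's inequality against the normalized measure $|h|\,d\sigma/\|h\|_{L^1}$, Fubini, and integrability of $|x-y|^{-k/\pi}$ on the one-dimensional boundary for $k<\pi$). No gaps to report.
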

Let $u_p$ be a family of positive solutions to \eqref{problem1}
satisfying \eqref{energybound}. We recall that
$v_p=u_p/\int_{\partial \Omega}u_p^p \ d\sigma(x)$ and $f_p=
u_p^p/\int_{\partial \Omega}u_p^p \ d\sigma(x)$. Hence $v_p$
satisfies
\begin{equation*}\label{vequation}
\left\{\begin{array}{lr}\Delta v_p= v_p\qquad  \mbox{ in }\Omega\\
\frac{\partial v_p}{\partial \nu}=f_p \qquad \mbox{ on }\partial
\Omega.
\end{array}\right.
\end{equation*}
We now define an important quantity:
\begin{equation}\label{L0}
\displaystyle L_0=\limsup_{p\rightarrow +\infty} \frac{p\
\gamma_p}{e}
\end{equation}
where $$\gamma_p=\int_{\partial\Omega}u_p^p \ d\sigma(x).$$ Note
that the quantity $L_0$ is well defined using \eqref{energybound}, \eqref{energiaSuSoluzioni} and H\"{o}lder inequality.\\
In the sequel, we denote any sequence $u_{p_n}$ of $u_p$ by $u_n$
and $\gamma_{p_n}$ of $\gamma_p$ by $\gamma_n$.\\Since $u_n$ has the
property
$$\int_{\partial \Omega}f_n \
 d\sigma(x)=\displaystyle\int_{\partial \Omega}\frac{u_n^{p_n}}{\int_{\partial \Omega}u_n^{p_n}\
 d\sigma(x)}\ d\sigma(x)=1$$
we can subtract a subsequence of $u_n$, still denoted by $u_n$, such
that there exists a positive bounded measure $\mu$ in $M(\partial
\Omega)$, the set of all real bounded Borel measures on $\partial
\Omega$, such that $\mu(\partial\Omega)\leq 1$ and
$$\displaystyle \int_{\partial \Omega}f_n\varphi\longrightarrow\displaystyle \int_{\partial \Omega}\varphi d\mu$$
for all $\varphi\in C(\partial \Omega)$ where
$$v_n = u_n/\gamma_n \quad \hbox{and} \quad f_n = \gamma_n^{p_n-1}v^{p_n}_n.$$
To analyze the measure $\mu$, we introduce some notations. For any
$\delta
> 0$, we call $x_0$ a $\delta$-regular point if there exists a
function $\varphi$ in $C(\partial \Omega)$, $0\leq \varphi \leq 1$,
with $\varphi= 1$ in a neighborhood of $x_0$ such that
\begin{equation}
\displaystyle\int_{\partial \Omega}\varphi \ d\mu<
\frac{\pi}{L_0+2\delta}.
\end{equation}
We define $$\Sigma(\delta) = \{x_o \in \partial \Omega : \quad x_o
\hbox{ is not a } \delta \hbox{-regular point}\}.$$ Our next lemma
plays a central role in the proof of Theorem \ref{theorem}. It says
that smallness of $\mu$ at a point $x_0$ implies boundedness of
$v_n$ near $x_0$.
\begin{lemma}\label{bound}
Let $x_0\in \partial \Omega$ be a $\delta$-regular point for some
$\delta> 0$. Then ${v_n}$ is bounded in $L^\infty(B_{R_0}(x_0)\cap
\Omega)$ for some $R_0> 0$.
\end{lemma}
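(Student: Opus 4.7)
The plan follows the Brezis--Merle strategy adapted to the Neumann boundary setting. Since $x_0$ is $\delta$-regular, fix a cut-off $\varphi\in C(\partial\Omega)$, $0\le\varphi\le 1$, with $\varphi\equiv 1$ on a neighborhood $V$ of $x_0$ and $\int_{\partial\Omega}\varphi\,d\mu<\pi/(L_0+2\delta)$. Decompose $v_n=w_{1,n}+w_{2,n}$, where $w_{1,n}$ and $w_{2,n}$ solve \eqref{linear equation} with Neumann data $\varphi f_n$ and $(1-\varphi)f_n$ respectively. Applying Lemma \ref{BM} to $w_{1,n}$ and using $\int_{\partial\Omega}\varphi f_n\,d\sigma\to\int_{\partial\Omega}\varphi\,d\mu<\pi/(L_0+2\delta)$, I choose $k\in(0,\pi)$ close enough to $\pi$ so that, for $n$ large, $k/\|\varphi f_n\|_{L^1(\partial\Omega)}>L_0+\delta$, which gives
\[
\int_{\partial\Omega}\exp\bigl((L_0+\delta)\,|w_{1,n}|\bigr)\,d\sigma\le C.
\]

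Next, $w_{2,n}$ is uniformly bounded on a smaller neighborhood $V'\subset V$ of $x_0$: by the Green's function representation $w_{2,n}(x)=\int_{\partial\Omega}G(x,y)(1-\varphi(y))f_n(y)\,d\sigma(y)$, the kernel $G(x,\cdot)$ is uniformly bounded for $x\in V'$ and $y$ in the support of $(1-\varphi)$, while $\|f_n\|_{L^1(\partial\Omega)}=1$. Combining with the previous estimate (and using $v_n\ge 0$) yields
\[
\int_{V'\cap\partial\Omega}\exp\bigl((L_0+\delta)\,v_n\bigr)\,d\sigma\le C.
\]

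The main technical step is to convert this exponential integrability into an $L^q$ bound on $f_n=\gamma_n^{p_n-1}v_n^{p_n}$ for some $q>1$ on $V'\cap\partial\Omega$, despite $p_n\to+\infty$ and $v_n$ being potentially of order $p_n$. The key tool is the elementary Young-type inequality $r^s\le (s/(\alpha e))^s e^{\alpha r}$, valid for $r,s,\alpha>0$. Applied with $r=v_n$, $s=qp_n$, $\alpha=L_0+\delta$ and combined with the bound $\gamma_n\le e(L_0+\delta/2)/p_n$ for $n$ large (immediate from the definition \eqref{L0} of $L_0$), a direct computation gives
\[
f_n^q\le C\,p_n^q\,\Bigl(\frac{q(L_0+\delta/2)}{L_0+\delta}\Bigr)^{qp_n}\exp\bigl((L_0+\delta)v_n\bigr).
\]
Since $(L_0+\delta/2)/(L_0+\delta)<1$, choosing $q>1$ sufficiently close to $1$ makes the geometric factor dominate $p_n^q$, and integrating the previous display gives $\int_{V'\cap\partial\Omega}f_n^q\,d\sigma\le C$ uniformly in $n$.

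Finally, elliptic regularity closes the argument: writing $v_n(x)=\int_{\partial\Omega}G(x,y)f_n(y)\,d\sigma(y)$ and splitting the integral into the part over $V'\cap\partial\Omega$ (estimated by H\"older using the $L^q$ bound just obtained and the fact that $G(x,\cdot)$ has only a logarithmic boundary singularity, hence belongs to every $L^{q'}(\partial\Omega)$) and the part over its complement (bounded as was $w_{2,n}$), I obtain $\|v_n\|_{L^\infty(B_{R_0}(x_0)\cap\Omega)}\le C$ for some $R_0>0$. The hardest point is the third step: calibrating the Young-type inequality so that the exponent on the exponential lands exactly at the threshold $L_0+\delta$ furnished by $\delta$-regularity, while the polynomial decay $\gamma_n\lesssim 1/p_n$ absorbs the $p_n^q$ growth and overcomes the fact that $v_n$ itself is unbounded in $p_n$.
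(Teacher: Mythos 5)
Your proof is correct, and while its skeleton coincides with the paper's (localize near $x_0$ using the $\delta$-regularity threshold, split $v_n$ into the part driven by the boundary data near $x_0$ and the rest, apply Lemma \ref{BM} to the first piece, bound the second by elliptic estimates, then establish an $L^q$ bound on $f_n$ near $x_0$ for some $q>1$ and conclude by Green's representation/elliptic regularity), the central step is carried out by a genuinely different mechanism. The paper proves the $L^q$ claim via the Ren--Wei device: from $\log x\le x/e$ and $\gamma_n^{1/p_n}\to 1$ (which relies on the two-sided bound \eqref{boundEnergiap}) it gets $f_n\le e^{t'v_n}$ with $t'=L_0+\delta/2$, then writes $f_n=(f_ne^{-v_{1n}})\,e^{v_{1n}}$, bounds the first factor in $L^t$ using the Brezis--Merle estimate with exponent $t'$, and invokes Lemma \ref{BM} a second time with the slightly improved exponent $t'+\eta$ to bound $e^{v_{1n}}$ in $L^{t'+\eta}$, concluding by H\"older. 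You instead bound $f_n^q=\gamma_n^{q(p_n-1)}v_n^{qp_n}$ pointwise by the optimized Young inequality $r^s\le (s/(\alpha e))^s e^{\alpha r}$ with $\alpha=L_0+\delta$, calibrated against $\gamma_n\le e(L_0+\delta/2)/p_n$, which follows directly from the definition \eqref{L0} of $L_0$; the resulting factor $\bigl(q(L_0+\delta/2)/(L_0+\delta)\bigr)^{qp_n}$ is geometrically small for $q$ close to $1$ and absorbs $p_n^q$, so a single application of Lemma \ref{BM} (with exponent $L_0+\delta$, which your choice of $k$ indeed permits since the $\delta$-regularity threshold is $\pi/(L_0+2\delta)$) suffices. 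Your route is somewhat more self-contained and quantitative: it avoids the second Brezis--Merle estimate, the H\"older-conjugate bookkeeping, and the use of $\gamma_n^{1/p_n}\to1$ (hence the lower bound in \eqref{boundEnergiap}), needing only the upper bound built into $L_0$; the paper's route stays closer to Ren--Wei and Davila--del Pino--Musso, which is convenient for citing those arguments later. The remaining minor differences (continuous cut-off from the definition of $\delta$-regularity versus characteristic functions of balls, Green's representation versus local elliptic estimates for the far/near pieces) are immaterial.
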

\begin{proof}
Let $x_0$ be a regular point. From the definition of regular points,
there exists $R > 0$ such that
$$\displaystyle\int_{\partial \Omega\cap B_{R}(x_0)}f_n \ d\sigma(x) <
\frac{\pi}{L_0+\delta}.$$ Put $a_n = \chi_{B_{R}(x_0)}f_n$ and $b_n
= (1-\chi_{B_{R}(x_0)})f_n$ where $\chi_{B_{R}(x_0)}$ denotes the
characteristic function of $B_R(x_0)$. Split $v_n = v_{1n} +
v_{2n}$, where $v_{1n}$ and $v_{2n}$ are  solutions to
$$\left\{\begin{array}{lr}\Delta v_{1n}= v_{1n}\qquad  \mbox{ in }\Omega\\
\frac{\partial v_{1n}}{\partial \nu}=a_n \mbox{ on }\partial \Omega
\end{array}\right.
$$ and
$$\left\{\begin{array}{lr}\Delta v_{2n}= v_{2n}\qquad  \mbox{ in }\Omega\\
\frac{\partial v_{2n}}{\partial \nu}=b_n \mbox{ on }\partial \Omega
\end{array}\right.
$$ respectively.\\
By the maximum principle, we have $v_{1n}, \  v_{2n} > 0$. Since
$b_n = 0$ on $B_{R}(x_0)$, elliptic estimates imply that
\begin{equation}\label{v2n}\|v_{2n}\|_{L^{\infty}(B_{R/2}(x_0)\cap\Omega)}\leq
C\|v_{2n}\|_{L^{1}(B_{R}(x_0)\cap\Omega)}\leq C,
\end{equation}
where we used the fact $\|v_{2n}\|_{L^{1}(\Omega)} =\|\Delta
v_{2n}\|_{L^{1}(\Omega)} = \|b_{n}\|_{L^{1}(\partial\Omega)}\leq C$
for the last inequality.
Thus we have to consider $v_{1n}$ only.\\
{\bf{Claim}}: There exists some $q>1$ such that
$$\displaystyle \int_{B_{R/2}(x_0)\cap \partial \Omega}f_n^q \ d\sigma(x)\leq C.$$
Indeed, let $t$ be such that $t' = L_0 + \delta/2$ where $t'$ is the
H\"{o}lder conjugate of $t$. From
$$\displaystyle\int_{\partial \Omega\cap B_{R}(x_0)}f_n \ d\sigma(x) <
\frac{\pi}{L_0+\delta}$$ using Lemma \ref{BM}, we have
\begin{equation}\label{(4.6)}
\displaystyle\int_{\partial \Omega\cap
B_{R}(x_0)}\exp\bigg[(L_0+\delta/2)|v_{1n}|\bigg]  \ d\sigma(x) \leq
C.
\end{equation}
Now observe that $\log (x) \leq x/e$ for $x > 0$. As in
\cite{RenWeiTAMS1994} (see page $759$), we have
$$p_n\log \frac{u_n}{\gamma_n^{1/p_n}}\leq t' \frac{u_n}{\gamma_n}$$
for $n$ large enough because $\displaystyle\lim_{n\rightarrow\infty}
\gamma_n^{1/p_n}=1$ which follows from \eqref{boundEnergiap}. Hence
$$f_n \leq e^{t'v_n}, \quad (f_n)^t e^{-tv_{1n}}\leq e^{(t'+t)v_{2n}+t'v_{1n}}.$$
Therefore since $v_{2n}$ is uniformly bounded on $B_{R/2}(x_0)\cap
\Omega$, we have
\begin{equation}\label{(4.7)}
(f_n)^t e^{-t v_{1n}} \leq Ce^{t'v_{1n}}\quad\hbox{on
}B_{R/2}(x_0)\cap \partial \Omega.
\end{equation}
Combining \eqref{(4.6)} and \eqref{(4.7)}, we get that
$f_ne^{-v_{1n}}$ is bounded in $L^t(B_{R/2}(x_0)\cap \partial
\Omega)$.\\
Fix $\eta > 0$ small enough such that $\pi-\eta >
\frac{\pi}{L_0+\delta}(t'+\eta)$. By Lemma \ref{BM} we have
$$\displaystyle\int_{\partial \Omega\cap
B_{R}(x_0)}\exp[(t'+\eta)|v_{1n}|]  \ d\sigma(x) \leq C.$$ Therefore
$e^{v_{1n}}$ is bounded in $L^{t'+\eta}(\partial \Omega\cap
B_{R}(x_0))$ and so $f_n=f_ne^{-v_{1n}}.e^{v_{1n}}$ is bounded in
$L^{q}(\partial \Omega\cap B_{R/2}(x_0))$ for some $q >
1$. Hence we get the claim.\\
This fact and elliptic estimates imply that $v_{1n}$ is uniformly
bounded in $L^\infty( \Omega\cap B_{R/4}(x_0))$. Taking account of
\eqref{v2n} and choosing $R_0= R/4$ the desired result follows.
\end{proof}
Let's go back to the proof of Theorem \ref{theorem}. Taking account
of \eqref{boundEnergiap} and Lemma \ref{bound}, by the same argument
of Ren and Wei (see \cite{RenWeiTAMS1994} page $759$), we have $S =
\Sigma(\delta)$ for any $\delta
>0$.
We get $S=\{x_o \in \partial \Omega : \quad x_o \hbox{ is not a }
\delta \hbox{-regular point for any }\delta>0\}$. Then
\begin{equation}\label{nnregular}\mu(\{x_0\})\geq \frac{\pi}{L_0+2\delta}
\end{equation}
for all $x_0 \in S$ and for any $\delta > 0$.\\
Hence $S$ is a finite nonempty set (since $\mu(\Omega)\leq 1$ and
$\|v_n\|_{L^\infty(\partial \Omega)}\rightarrow +\infty$) and from
Lemma \ref{bound} for every $x \in\partial \Omega\setminus S$ we
have that $v_n$ is bounded in a neighborhood of $x$. Then $v_n$ is
bounded in compact subsets of $\partial \Omega\setminus S$ and so
$f_n\rightarrow 0$ uniformly on compact subsets of $\partial
\Omega\setminus S$ using \eqref{boundEnergiap}. This shows that the
support of $\mu$ is contained in $S$ and therefore we can write
\begin{equation}\label{measure}\displaystyle\mu=\sum^m_{i=1}a_i\delta_{x_i}
\end{equation}
where $a_i > 0$ and $x_i\in\partial \Omega$. Hence we get parts
$(1)$ and $(2)$ and we will come back later to the proof of
\eqref{weight}. We point out that \eqref{nnregular} and
\eqref{measure} imply that $a_i\geq \pi/ L_0$.\\ Now, we need the
following elliptic $L^1$ estimate by Brezis and Strauss
\cite{Brezis-Strauss} for weak solutions with the $L^1$ Neumann
data.
\begin{lemma}\label{brezis-strauss}
Let $u$ be a weak solution of
\begin{equation*}
\left\{\begin{array}{lr}-\Delta u+ u= f  \ \mbox{ in }\Omega,\\
\frac{\partial u}{\partial \nu}=h \qquad \quad \mbox{ on }\partial
\Omega
\end{array}\right.
\end{equation*}
with $f \in L^1(\Omega)$ and $g \in L^1(\partial \Omega)$, where
$\Omega$ is a smooth bounded domain in $\mathbb{R}^N$, $N \geq 2$.
Then we have $u \in W^{1,q}(\Omega)$ for all $1 \leq q < \frac{N}{
N-1}$ and
$$\|u\|_{W^{1,q}(\Omega)}\leq C_q(\|f\|_{L^1(\Omega)}+\|g\|_{L^1(\partial\Omega)})$$
holds.
\end{lemma}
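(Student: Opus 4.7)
The plan is to prove this by a duality argument against the regular dual Neumann problem, combined with a standard Stampacchia--type truncation to handle the low integrability of $f$ and $h$. First I would approximate the data: choose $f_k\in C^\infty(\overline\Omega)$ and $h_k\in C^\infty(\partial\Omega)$ with $f_k\to f$ in $L^1(\Omega)$, $h_k\to h$ in $L^1(\partial\Omega)$, and let $u_k$ be the smooth solution of the corresponding regularized problem. The goal is then to derive a uniform bound
\[
\|u_k\|_{W^{1,q}(\Omega)}\leq C_q\bigl(\|f_k\|_{L^1(\Omega)}+\|h_k\|_{L^1(\partial\Omega)}\bigr),\qquad 1\leq q<\tfrac{N}{N-1},
\]
and pass to the limit using weak compactness in $W^{1,q}$; uniqueness of the weak solution ensures the limit equals $u$.

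For the a priori bound I would use duality. For arbitrary $\Phi\in C^\infty(\overline\Omega;\R^N)$ with $q'$ the conjugate exponent of $q$ (so $q'>N$), I would solve the auxiliary Neumann problem
\[
-\Delta\psi+\psi=-\operatorname{div}\Phi\text{ in }\Omega,\qquad \tfrac{\partial\psi}{\partial\nu}=\Phi\cdot\nu\text{ on }\partial\Omega.
\]
By classical $W^{2,q'}$ regularity for the Neumann Laplacian on a smooth domain, $\|\psi\|_{W^{2,q'}(\Omega)}\leq C\|\Phi\|_{L^{q'}(\Omega)}$, and since $q'>N$ Morrey's embedding gives $\|\psi\|_{L^\infty(\overline\Omega)}\leq C\|\Phi\|_{L^{q'}}$. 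Testing the equation for $u_k$ with $\psi$ and integrating by parts in both directions yields
\[
\int_\Omega\nabla u_k\cdot\Phi\,dx=\int_\Omega f_k\,\psi\,dx+\int_{\partial\Omega}h_k\,\psi\,d\sigma-\int_\Omega u_k\,\psi\,dx+\text{(matching boundary term)},
\]
and after the usual cancellations the right-hand side is bounded by $C\|\Phi\|_{L^{q'}}(\|f_k\|_{L^1}+\|h_k\|_{L^1}+\|u_k\|_{L^1})$. Taking the supremum over $\Phi$ with $\|\Phi\|_{L^{q'}}\leq 1$ controls $\|\nabla u_k\|_{L^q}$, provided one also has $\|u_k\|_{L^1}$ under control, which follows from the same duality with $\Phi=0$ and $\psi$ the solution of $-\Delta\psi+\psi=\operatorname{sgn}(u_k)$ (or any $L^\infty$ bounded test).

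The main obstacle is that the elementary duality gives $W^{1,q}$ only up to the endpoint, and care is needed near $q=N/(N-1)$ because the embedding $W^{2,q'}\hookrightarrow L^\infty$ degenerates as $q'\downarrow N$; this is exactly why the statement is restricted to $q<N/(N-1)$. An equivalent route, which is the one used by Brezis--Strauss, is to test the equation against $T_t(u_k)/t$ where $T_t$ is the truncation at height $t$, obtaining
\[
\frac{1}{t}\int_{\{|u_k|<t\}}|\nabla u_k|^2\,dx\leq \|f_k\|_{L^1}+\|h_k\|_{L^1},
\]
and to combine this with the Sobolev embedding to deduce a Marcinkiewicz (weak-$L^{N/(N-1)}$) estimate for $\nabla u_k$, whence $L^q$ for every $q<N/(N-1)$. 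Either way, once the uniform estimate is established, weak convergence in $W^{1,q}$ together with trace continuity in $W^{1,q}\hookrightarrow L^1(\partial\Omega)$ lets us identify the limit with $u$ and conclude.
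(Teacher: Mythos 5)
The paper does not prove this lemma at all: it is quoted verbatim as a known result and attributed to Brezis--Strauss \cite{Brezis-Strauss}, so there is no in-paper argument to compare against. Your sketch is a legitimate reconstruction along standard lines, and both routes you describe (duality against a regular adjoint problem, or truncation $T_t(u_k)/t$ plus a Boccardo--Gallou\"et type level-set argument yielding a weak $L^{N/(N-1)}$ bound on $\nabla u_k$) do lead to the stated estimate. One point needs correcting in the duality branch: for $\Phi$ merely in $L^{q'}$ the auxiliary problem $-\Delta\psi+\psi=-\operatorname{div}\Phi$, $\partial\psi/\partial\nu=\Phi\cdot\nu$ does \emph{not} give $\psi\in W^{2,q'}$ with $\|\psi\|_{W^{2,q'}}\leq C\|\Phi\|_{L^{q'}}$; the correct Calder\'on--Zygmund statement for divergence-form data is $\|\psi\|_{W^{1,q'}(\Omega)}\leq C\|\Phi\|_{L^{q'}(\Omega)}$. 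This is still enough, since $q'>N$ and Morrey's embedding $W^{1,q'}\hookrightarrow C(\overline\Omega)$ give the $L^\infty(\overline\Omega)$ control of $\psi$ (hence also of its trace, which you need for the term $\int_{\partial\Omega}h_k\psi$), and it makes transparent why the range is exactly $q<N/(N-1)$. Two smaller remarks: the identity you write after testing should contain no leftover $\int u_k\psi$ term, because the bilinear form $\int\nabla u_k\cdot\nabla\psi+u_k\psi$ appears identically in both weak formulations and cancels; and for the full $W^{1,q}$ norm you need $\|u_k\|_{L^q}$, not just $\|u_k\|_{L^1}$, which follows from a second duality with $-\Delta\psi+\psi=g$, $\partial\psi/\partial\nu=0$, $g\in L^{q'}$, using $W^{2,q'}\hookrightarrow L^\infty$. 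With these repairs the argument is complete.
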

Using Lemma \ref{brezis-strauss}, we have $v_n$ is uniformly bounded
in $W^{1,q}(\Omega)$ for any $1\leq q < 2$. Thus, by choosing a
subsequence, we have a function $v^*$ such that $v_n\rightharpoonup
v^*$ weakly in $W^{1,q}(\Omega)$ for any $1 \leq q < 2$, $v_n
\rightarrow v^*$ strongly in $L^t(\Omega)$ and $L^t(\partial
\Omega)$ respectively for any $1\leq t < \infty$. The last
convergence follows by the compact embedding $W^{1,q}(\Omega)
\hookrightarrow L^t(\Omega)$ for any $1  \leq t < q /(2-q) $. Thus
by taking the limit in the equation
$$\int_{\Omega}(-\Delta \varphi+\varphi)v_n \ dx =
\int_{\partial \Omega}f_n\varphi \ d\sigma(x) -\int_{\partial
\Omega}\frac{\partial \varphi}{\partial \nu} v_n\ d\sigma(x)$$ for
any $\varphi\in C^1(\overline{\Omega})$, we obtain
$$\int_{\Omega}(-\Delta \varphi+\varphi)v^* \ dx +\int_{\partial
\Omega}\frac{\partial \varphi}{\partial \nu} v^*\ d\sigma(x) =
\sum^m_{i=1}a_i\varphi(x_i)$$ which implies $v^*$ is the solution of
the following problem
$$\left\{\begin{array}{lr}\Delta v^*= v^*\qquad  \mbox{ in }\Omega\\
\displaystyle\frac{\partial v^*}{\partial
\nu}=\sum^m_{i=1}a_i\delta_{x_i} \mbox{ on }\partial \Omega
\end{array}\right.
$$
From this it follows that
$$\displaystyle v^*(x)=\sum_{i=1}^m a_iG(x,x_i).$$
In the sequel, we will prove that $\displaystyle v_{n}\rightarrow
v^*$ in $C^1_{loc}(\overline{\Omega}\setminus S)$. We start by using
Green representation for $v_n$:
\begin{equation}
v_n(x)=\displaystyle\int_{\partial\Omega}G(x,y)f_n(y)\ d\sigma(y),
\end{equation}
where $G(x,y)$ is Green's function for Neumann problem
\eqref{Greenequation}. Suppose $x\in\Omega$ and
$d=dist(x,\partial\Omega)$. Then, for $z\in B_{d/2}(x),$ we have
$dist(z,\partial\Omega)\geq\frac{1}{2}d,$ and
\begin{eqnarray}\label{interiorbound}
|u_n(z)|&\leq& \displaystyle\int_{\partial\Omega} |\frac{1}{\pi}\log
\frac{1}{ |z - y|}+H(z, y)| f_n(y)\ d\sigma(y)\nonumber\\
&\leq& \int_{\partial\Omega} \left(\frac{1}{\pi}|\log \frac{2}{
d}|+|H(z, y)|\right) f_n(y)\ d\sigma(y)\nonumber\\
 &\leq&C(|\log d|+1)\int_{\partial\Omega}f_n(y)d\sigma(y)\leq C,
 \quad \forall z\in B_{d/2}(x).
\end{eqnarray}
Let $K$ be a compact set in $\overline{\Omega}\setminus S$. From
Lemma \ref{bound} and \eqref{interiorbound}, we have that
$$v_n\leq C \hbox{ on }K.$$
Since $v_n$ are bounded on $K$ and satisfy $\Delta v_n -v_n=0$ in
$\mathring{K}$, we have by the elliptic regularity theory a
subsequence of $v_n$, still denoted by $v_n$ that approaches the
same function $v^*$ in $C^1(K)$.\\
We proved part $(3)$.\\

Finally, we prove simultaneously \eqref{weight} and Statement $(4)$
of Theorem \ref{theorem}, that are the choice of the weights $a_i$'s
and the localization of the concentration points. We borrow the idea
of \cite{DavilaDM} and derive Pohozaev-type identities in balls
around the peak point. Let us concentrate on $x_1$. Without loss of
generality, We may assume $x_1 = 0$. In the sequel, we use a
particular straightening of the boundary introduced in
\cite{DavilaDM}. That is a conformal diffeomorphism $\Phi_c: \Pi
\cap B_{R_1} \longrightarrow \Omega \cap B_{r_1}$ which flattens the
boundary $\partial \Omega$, where $\Pi = \{(y_1, y_2)\ |\ y_2
> 0\}$ denotes the upper half space and $R_1
> 0$ is a radius sufficiently small such that $(\partial\Omega \cap B_{r_1})\cap S=\{0\}$. We may choose $\Phi_c$ is at least $C^3$, up to $\partial \Pi\cap B_{R_1}$, $\Phi_c(0) = 0$ and $D\Phi_c(0) = Id$. Set $\widetilde{u}_n(y)
= u_n(\Phi_c(y))$ for $y = (y_1, y_2) \in  \Pi \cap B_{R_1}$. Then
by the conformality of $\Phi_c$, $\widetilde{u}_n$
 satisfies
 \begin{equation}\label{pb}
\left\{\begin{array}{lcl}-\Delta \widetilde{u}_n+b(y)\widetilde{u}_n= 0\qquad  &\mbox{in}&\Pi \cap B_{R_1},\\
\frac{\partial \widetilde{u}_n}{\partial
\nu}=h(y)\widetilde{u}_n^{p_n}\ &\mbox{on}&\partial \Pi \cap
B_{R_1},
\end{array}\right.
\end{equation}
where $\widetilde{\nu}$ is the unit outer normal vector to
$\partial(\Pi \cap B_{R_1} )$, $b$ and $h$ are defined as
$$b(y) = |detD\Phi_c(y)|,\quad  h(y) = |D\Phi_c(y)e|$$
with $e = (1,0)$. Note that $\widetilde{\nu}(y) = \nu(\Phi_c(y))$
for $y \in
\partial \Pi\cap B_{R_1} $. Note also that, by using a clever idea of
\cite{DavilaDM}, we can modify $\Phi_c$ to prescribe the number
$$\alpha =\frac{\big(\frac{\partial h}{\partial y_1}\big)}{h(y)^2}\bigg|_{y=0}=
\bigg(\frac{\partial h}{\partial y_1}\bigg)(0).$$ Let $R$ be such
that $0<R<R_1$. Applying now the Pohozaev identity to problem
\eqref{pb} we get
\begin{eqnarray*}\label{pohozaevidentity1}
&&\displaystyle\int_{\Pi\cap
B_R}b(y)\widetilde{u}_n^2(y)dy+\frac{1}{2}\int_{\Pi\cap
B_R}(y-y_0,\nabla
b(y))\widetilde{u}_n^2(y)dy\\&&=\frac{1}{2}\int_{\partial (\Pi\cap
B_R)}
(y-y_0,\widetilde{\nu})b(y)\widetilde{u}_n^2(y)d\sigma(y)-\int_{\partial
(\Pi\cap B_R)} (y-y_0,\nabla \widetilde{u}_n(y))\frac{\partial
\widetilde{u}_n}{\partial
\nu}d\sigma(y)\\&&+\frac{1}{2}\int_{\partial (\Pi\cap
B_R)}(y-y_0,\widetilde{\nu})|\nabla \widetilde{u}_n|^2 d\sigma(y)
\hbox{ for any }y_0\in \mathbb{R}^2,
\end{eqnarray*}
where and from now on, $\widetilde{\nu}$ will be used again to
denote the unit normal to $\partial(H \cap B_R)$. The proof of the
Pohozaev identity is standard and it is omitted here.
Differentiating with respect to $y_0$, we have, in turn,
\begin{eqnarray*}
&&\int_{\partial (\Pi\cap B_R)} \nabla
\widetilde{u}_n(y)\frac{\partial \widetilde{u}_n}{\partial
\widetilde{\nu}}\ d\sigma(y)\\&&=\frac{1}{2}\int_{\partial (\Pi\cap
B_R)} \big(|\nabla \widetilde{u}_n|^2+b(y)\widetilde{u}_n^2\big)
\widetilde{\nu}d\sigma(y)-\frac{1}{2}\int_{ \Pi\cap B_R} \nabla
b(y)\widetilde{u}_n^2\  dy.
\end{eqnarray*}
Since $\widetilde{\nu}=
(\widetilde{\nu}_1,\widetilde{\nu}_2)=(0,-1)$ on $\partial \Pi \cap
B_R$, the first component of the above vector equation reads
\begin{eqnarray}\label{46}
&&\int_{\partial \Pi\cap B_R}( \widetilde{u}_n)_{y_1}h(y)
\widetilde{u}_n^{p_n}d\sigma(y)+\int_{\Pi\cap \partial B_R} (
\widetilde{u}_n)_{y_1}\frac{\partial \widetilde{u}_n}{\partial
\widetilde{\nu}}\ d\sigma(y)\nonumber\\
&&=\frac{1}{2}\int_{\Pi\cap \partial B_R} \big(|\nabla
\widetilde{u}_n|^2+b(y)\widetilde{u}_n^2\big)
\widetilde{\nu}_1d\sigma(y)-\frac{1}{2}\int_{ \Pi\cap B_R}
b_{y_1}(y)\widetilde{u}_n^2 dy,
\end{eqnarray}
where $( )_{y_1}$ denotes the derivative with respect to $y_1$.\\
Recall that $\gamma _n = \int_{\partial \Omega} u^{p_n}_ n
d\sigma(x)$. From the fact that $\widetilde{f}_n(y) =
\frac{\widetilde{u}^{p_n}_ n}{\gamma_n}\rightharpoonup a_1\delta_0$
in the sense of Radon measures on $\partial \Pi \cap B_R$,
\eqref{boundSoluzio} and \eqref{boundEnergiap},
 we see
$$\widetilde{g}_n(y):=\frac{1}{\gamma_n^2}\frac{\widetilde{u}_n^{p_n+1}(y)}{p_n+1}=
\frac{1}{(p_n+1)\gamma_n}\widetilde{f}_n(y)\widetilde{u}_n(y)$$
satisfies that $supp(\widetilde{g}_n)\rightarrow\{0\}$ and
$\int_{\partial \Pi\cap B_R}\widetilde{g}_nd\sigma(y) = O(1)$ as
$n\rightarrow +\infty$. Thus, by choosing a subsequence, we have the
convergence
\begin{equation}
\widetilde{g}_n(y)=\frac{1}{\gamma_n^2}\frac{\widetilde{u}_n^{p_n+1}(y)}{p_n+1}\rightharpoonup
C_1\delta_0
\end{equation}
in the sense of Radon measures on $\partial \Pi\cap B_R$, where $C_1
= lim_{n\rightarrow +\infty} \int_{\partial \Pi\cap B_R}
\widetilde{g}_nd\sigma(y)$ (up to a subsequence). By using this
fact, we have
\begin{eqnarray*}
&&\frac{1}{\gamma_n^2}\int_{\partial \Pi\cap B_R}(
\widetilde{u}_n)_{y_1}h(y)
\widetilde{u}_n^{p_n}d\sigma(y)\\&&=\bigg[\frac{h(y)}{\gamma
_n^2}\frac{\widetilde{u}_n^{p_n+1}(y)}{p_n+1}\bigg]^{y_1=R}_{y_1=-R}-\int_{\partial
\Pi\cap B_R}h_{y_1}(y)\frac{\widetilde{u}_n^{p_n+1}(y)}{(p_n+1)\gamma_n^2}d\sigma(y)\\
&&\rightarrow 0-C_1h_{y_1}(0)=-C_1 \alpha
\end{eqnarray*}
as $n\rightarrow +\infty$.\\
Let $\widetilde{v}^*  (y) = v^* ( \Phi_c (y))$ denote the limit
function in the $y$ coordinates, and observe that
$$\widetilde{v}_{p_n}
\rightarrow \widetilde{v}^* \hbox{ in } C^1_{ loc}(\overline{\Pi
}\cap B_R\backslash\{0\} ).$$
 Thus after
dividing \eqref{46} by $\gamma_n^2$ and then letting $n\rightarrow
\infty$, we obtain
\begin{eqnarray}\label{47}
&&-C_1\alpha +\int_{ \Pi\cap \partial B_R}
\widetilde{v}^*_{y_1}\frac{\partial \widetilde{v}^*}{\partial
\widetilde{\nu}}\ d\sigma(y)\nonumber\\
&&=\frac{1}{2}\int_{\Pi\cap \partial B_R} \big(|\nabla
\widetilde{v}^*|^2+b(y)(\widetilde{v}^*)^2\big) \widetilde{\nu}_1\
d\sigma(y)-\frac{1}{2}\int_{ \Pi\cap B_R}
b_{y_1}(y)(\widetilde{v}^*)^2\ dy.
\end{eqnarray}
At this point, we have the same formula as the equation $(117)$ in
\cite{DavilaDM}, thus we obtain the result. Indeed decompose
$v^*(x)=s(x)+w(x)$ where
$$s(x)=\frac{a_1}{\pi}\log \frac{1}{|x|},\quad w(x)=a_1H(x,0)+\sum_{j=2}^ma_jG(x,x_j). $$
We define then the corresponding functions in the new coordinates
$$\widetilde{s}(y) = s( \Phi_c (y)),\quad \widetilde{w}(y) = w( \Phi_c  (y)),\quad y\in  \Pi\cap B_{R_1} .$$
Using this decomposition and the fact that $\widetilde{w}$ satisfies
$$-\Delta \widetilde{w}+b(y)\widetilde{w}=-b(y)\widetilde{s}(y)\quad \hbox{in }\Pi \cap B_{R_1},$$
we derive from \eqref{47}
\begin{eqnarray}\label{48}
&&-C_1\alpha +\int_{ \Pi\cap \partial
B_R}\widetilde{s}_{\widetilde{\nu}}
\widetilde{s}_{y_1}+\widetilde{s}_{\widetilde{\nu}}
\widetilde{w}_{y_1}+\widetilde{s}_{y_1}\widetilde{w}_{\widetilde{\nu}}\
d\sigma(y)\nonumber\\
&&=\int_{\Pi\cap \partial B_R} \big(\frac{1}{2}|\nabla
\widetilde{s}|^2+\nabla \widetilde{s}\nabla
\widetilde{w}+\frac{1}{2} b(y)\widetilde{s}^2+b(y)
\widetilde{s}\widetilde{w}\big)
\widetilde{\nu}_1d\sigma(y)\nonumber\\
&&-\int_{  \Pi\cap B_R}
b_{y_1}(y)(\frac{1}{2}\widetilde{s}^2+\widetilde{s}\widetilde{w})\
dy+\int_{
\partial \Pi\cap B_R}\widetilde{w}_{\widetilde{\nu}}
\widetilde{w}_{y_1}\ d\sigma(y)\nonumber\\
&&- \int_{  \Pi\cap B_R}b(y)\widetilde{s}\widetilde{w}_{y_1}\  dy,
\end{eqnarray}
where $\widetilde{s}_{\widetilde{\nu}}$ and
$\widetilde{w}_{\widetilde{\nu}}$
 are the partial
derivatives with respect to $\widetilde{\nu}$ of the functions
$\widetilde{s}$ and $\widetilde{w}$, respectively. Letting
$R\rightarrow 0$ and using \cite[Lemma 9.3]{DavilaDM} together with
\eqref{48} we obtain
$$-\alpha C_1 + \frac{3\alpha}{4\pi}a_1^2- a_1\widetilde{w}_{y_1}(0)=
\frac{\alpha}{4\pi}a_1^2- \frac{a_1}{2}\widetilde{w}_{y_1}(0)$$ that
is
$$\alpha \big(\frac{a_1^2}{2\pi}-C_1\big)=\frac{1}{2}a_1\widetilde{w}_{y_1}(0).$$
Since $\alpha\in \mathbb{R}$ can be chosen arbitrarily, we conclude
that $$\widetilde{w}_{y_1}(0) = 0\quad \hbox{and}\quad  a_1^2=2\pi
C_1=2\pi \displaystyle\lim_{R\rightarrow 0}\lim_{n\rightarrow
+\infty} \int_{\partial \Pi\cap B_R} \widetilde{g}_nd\sigma(y).$$
Consequently
the desired conclusion of Theorem \ref{theorem} (4) follows and by using a change of variables we get \eqref{weight}. \cqfd\\

{\bf{Acknowledgment:}}\\
The author wishes to express his sincere gratitude to Professor
Filomena Pacella for her kind hospitality and support during his
visits to the Department of Mathematics of the University of Roma
"La Sapienza". This work was partially supported by the " Istituto
Nazionale di Alta Matematica ". He also thanks Professors Franchesca
De Marchis and Isabella Ianni for valuable discussions.


\begin{thebibliography}{99}

\bibitem{AdiGrossi} Adimurthi, M. Grossi, \emph{Asymptotic estimates  for a two-dimensional problem with polynomial nonlinearity}, Proc. Amer. Math. Soc. 132 (2004), no. 4, 1013-1019.
%

%
\bibitem{BFS} M. Ben Ayed, H. Fourti, A. Selmi, \emph{Harmonic Functions with nonlinear Neumann
boundary condition and their Morse indices}, Nonlinear Analysis:
Real World Applications 38 (2017) 96-112.
%

%
\bibitem{Brezis-Merle} H. Brezis, F. Merle, \emph{Uniform estimates and blow-up behavior for
solutions of $-\Delta u = V (x)e^u$ in two dimensions}, Comm.
Partial Differential Equations, 16 (1991) 1223-1253.
%
\bibitem{Brezis-Strauss} H. Brezis, W. Strauss, \emph{Semi-linear second-order elliptic
equations in $L^1$}, J. Math. Soc. Japan, 25 (1973) 565-590.



\bibitem{Castro2} H. Castro, \emph{Solutions with spikes at the boundary for a 2D nonlinear
Neumann problem with large exponent}, J. Differential Equations, 246
(2009) 2991-3037.

\bibitem{Castro1} H. Castro, \emph{Asymptotic estimates for the least energy solution of a planar semi-linear
Neumann problem,}

%
\bibitem{Cherrier} P. Cherrier, \emph{Probl\`emes de Neumann non lin\'eaires sur
les varietes riemanniennes}, J. Functional Anal., 57, 154-206 (1984)

\bibitem{DavilaDM} \emph{J. Davila, M. del Pino, M. Musso}, Concentrating solutions in
a two-dimensional elliptic problem with exponential Neumann data, J.
Functional Anal., 227 (2005) 430-490.


\bibitem{DFIP} F. De Marchis, H. Fourti, I. Ianni, F. Pacella,
\emph{to appear}.

\bibitem{DGIP1}
F. De Marchis, M. Grossi, I. Ianni and F. Pacella,
\emph{$L^\infty$-norm and energy quantization for the planar
Lane-Emden problem with large exponent}, Archiv der Mathematik, 111
(4) 421-429 (2018).

\bibitem{DeMarchisIanniPacellaJEMS} F. De Marchis, I. Ianni, F. Pacella,
\emph{Asymptotic analysis and sign changing bubble towers for
Lane-Emden problems}, Journal of the European Mathematical Society
17 (8) (2015) 2037-2068.

\bibitem{DeMarchisIanniPacellaPositivesolutions} F. De Marchis, I. Ianni, F. Pacella, \emph{Asymptotic profile of
positive solutions of Lane-Emden problems in dimension two}, J. Fix.
Point Theory A. 19 (2017), no.1, 889-916.


%
\bibitem{DeMarchisIanniPacellaLondon}
F. De Marchis, I. Ianni and F. Pacella, \emph{Asymptotic analysis of
the Lane-Emden problem in dimension two, contribution in the volume
Partial Differential Equations arising from Physics and Geometry},
London Mathematical Society Lecture Note Series (No. 450), Cambridge
University Press (2018).
%
\bibitem{Druet} O. Druet, \emph{Multibumps analysis in dimension 2: quantification of blow-up levels}, Duke Math. J. 132 (2006), no. 2, 217-269.
%
\bibitem{E}
L. C. Evans, \emph{Partial differential equations}. Americain
Mathematical society, (1998). Third  printing, 2002.
%
\bibitem{GT} Gilbarg D, Trudinger N S. \emph{Elliptic partial differential equation of second order}.
Springer-Verlag, 1977
%
\bibitem{GL}
Y. X. Guo, J. Q. Liu, \emph{Blow-up analysis for solutions of the
Laplacian equation with exponential Neumann boundary condition in
dimension two},Commun. Contemp. Math., 8,  (2006)737-761.
%
\bibitem{GrossiGrumiauPacella1} M. Grossi, C. Grumiau, F. Pacella,
\emph{Lane Emden problems: asymptotic behavior of low energy nodal
solutions}, Ann. Inst. H. Poincar\'{e} Anal. Non Lin\'{e}aire 30
(2013), no. 1, 121-140.
%

%
\bibitem{LWZ} A. Le, Z-Q. Wang, J. Zhou, \emph{Finding Multiple Solutions to Elliptic PDE with Nonlinear
Boundary Conditions}, J. Sci. Comput. 56 (2013), 591-615.
%
\bibitem{Liu}
P. Liu, \emph{A Moser-Trudinger type inequality and blow-up analysis
on Riemann surfaces}, dissertation, Der Fakultat fur Mathematik and
Informatik der Universitat Leipzig (2001).
%


\bibitem{RenWeiTAMS1994} X. Ren, Xiaofeng, J. Wei, \emph{On a two-dimensional elliptic problem with large exponent in nonlinearity}, Trans. Amer. Math. Soc. 343 (1994), no. 2, 749-763.

\bibitem{RenWeiPAMS1996}
X. Ren, Xiaofeng, J. Wei, \emph{Single-point condensation and
least-energy solutions}, Proc. Amer. Math. Soc. 124 (1996), no. 1,
111-120.

\bibitem{RR}
M. Renardy and R. Rogers, \emph{Introduction to partial differential
equations,} Springer, Berlin, (1993).

\bibitem{SantraWei}
S. Santra, J. Wei,  \emph{Asymptotic behavior of solutions of a
biharmonic Dirichlet problem with large exponents}, J. Anal. Math.
115 (2011), 1-31.

\bibitem{Shamir1} E. Shamir, \emph{Mixed boundary value problems for elliptic equations
in the plane. The $L^p$ theory}, Ann. Scuola Norm. Sup. Pisa (3) 17
(1963), 117-139.






\bibitem{Takahashi} F. Takahashi, \emph{Asymptotic behavior of least energy solutions
for a 2D nonlinear Neumann problem with large exponent}, J. Math.
Anal. Appl. 411 (2014), no. 1, 95-106.


\bibitem{Thizy}
P.-D. Thizy, \emph{Sharp quantization for Lane-Emden problems in
dimension two,} Pacific Journal of Mathematics 300 (2019), No. 2,
491-497.





\end{thebibliography}
\end{document}